\newtheorem{thm}{Theorem}[section]
\newtheorem{cor}[thm]{Corollary}
\newtheorem{claim}[thm]{Claim}
\newtheorem{fact}[thm]{Fact}
\newtheorem{lemma}[thm]{Lemma}
\newtheorem{prop}[thm]{Proposition}
\theoremstyle{definition}
\newtheorem{definition}[thm]{Definition}
\newtheorem{remark}[thm]{Remark}
\newtheorem{question}[thm]{Question}
\newtheorem{problem}[thm]{Problem}
\newtheorem{conj}[thm]{Conjecture}
\def\rquotient#1#2{%
	\makeatletter
	\raise.3ex\hbox{$#1$}/\lower.3ex\hbox{$#2$}%
	\makeatother
}	
\newcommand{\subjclass}[2][2010]{%
	\let\@oldtitle\@title%
	\gdef\@title{\@oldtitle\footnotetext{#1 \emph{Mathematics subject classification.} #2}}%
}
\newcommand{\keywords}[1]{%
	\let\@@oldtitle\@title%
	\gdef\@title{\@@oldtitle\footnotetext{\emph{Key words and phrases.} #1.}}%
}
\newcommand{\Address}{{
		\bigskip
		\small
		
		\textsc{University of Montpellier\\ 
Institut Math\'ematiques Alexander Grothendieck\\
Place Eug\`ene Bataillon\\
34090 Montpellier (France)}\par\nopagebreak
		\textit{E-mail address}: \texttt{anthony.genevois@umontpellier.fr}
		
}}
\title{Examples of cubulable groups with fixed-point properties}
\date{\today}
\author{Anthony Genevois}
\subjclass{Primary 20F65. Secondary 20F67, 22D55.}
\keywords{Median graphs, CAT(0) cube complexes, Graph products of groups, Fixed-point property}
\begin{document}

\maketitle

\begin{abstract}
For every $n \geq 1$, let $(\mathrm{FW}_n)$ denote the fixed-point property for median graphs of cubical dimension $n$ (or equivalently, for CAT(0) cube complexes of dimension $n$). In this article, we construct explicit examples of groups satisfying $(\mathrm{FW}_n)$ but with good cubical properties in higher dimensions. First, we prove that, for a finitely generated group $G$ with no non-abelian free subgroup, $G$ satisfies $(\mathrm{FW}_n)$ if and only if no subgroup $H \leq G$ of index $\leq n$ can be mapped to $\mathbb{D}_\infty$ with an infinite image. For instance, the affine Coxeter group $\tilde{A}_n$ satisfies $(\mathrm{FW}_n)$ but not $(\mathrm{FW}_{n+1})$. In another direction, we investigate virtually graph products of finite groups. As an application of our constructions, we find explicit examples, for every $n \geq 1$, of acylindrically hyperbolic groups that are cocompactly cubulable but satisfy $(\mathrm{FW}_n)$. Several conjectures and open questions are included. 
\end{abstract}

\tableofcontents

\section{Introduction}

As a counterpart of the leitmotiv of geometric group theory that consists in studing groups acting in some interesting way on specific classes of metric spaces, the search for properties and examples of groups with only strongly restricted actions on the same families of spaces is a natural problem. In this article, we focus our attention to median graphs, also known as one-skeletons of CAT(0) cube complexes\footnote{Despite the fact that median graphs and one-skeletons of CAT(0) cube complexes define the same objects, we motivate in \cite{MedianVsCC} the idea that thinking about median graphs is more natural and more efficient. Therefore, we use this terminology in the article.}. Thinking about median graphs as a natural interpolation between (simplicial) trees and $\ell^1$-spaces, fixed-point properties on median graphs provide a natural interpolation between Serre's fixed-point property $(\mathrm{FA})$ on trees and a discrete version of Kazhdan's property $(\mathrm{T})$ known as $(\mathrm{FW})$ (as $(\mathrm{T})$ turns out to be equivalent, for locally compact second countable groups, to the fixed-point property on median spaces \cite{medianviewpoint}). 

$$\begin{array}{ccccccccccc}
(\mathrm{FA}) & &&&&&&&& & (\mathrm{T}) \\ \Updownarrow & &&&&&&&& & \Downarrow \\ (\mathrm{FW}_1) & \Leftarrow & (\mathrm{FW}_2) & \Leftarrow & \cdots & \Leftarrow & (\mathrm{FW}_\mathrm{fin}) & \Leftarrow & (\mathrm{FW}_\omega) & \Leftarrow & (\mathrm{FW})
\end{array}$$

\noindent
Here, $(\mathrm{FW})$ (resp. $(\mathrm{FW}_\omega)$, $(\mathrm{FW}_\mathrm{fin})$, $(\mathrm{FW}_n)$ for some $n \geq 1$) refers to the fixed-point property on median graphs (resp. with no infinite cube, of finite cubical dimension, of cubical dimension $n$). We emphasize that, given a specific family $\mathcal{X}$ of median graphs, a group $G$ satisfies the fixed-point property on $\mathcal{X}$ if every action of $G$ on every graph $X \in \mathcal{X}$ stabilises a cube (or equivalently, fixes a vertex in the cubical subdivision of $X$, or fixes a point in the cube-completion of $X$, or has bounded orbits).

\paragraph{State of the art.} Let us summarise what is known about $(\mathrm{FW})$ and its relatives.

 The fact that $(\mathrm{T})$ implies $(\mathrm{FW})$ is proved in \cite{MR1459140}. Consequently, groups such as $\mathrm{SL}(n,\mathbb{Z})$ for $n \geq 3$ and $\mathrm{Out}(\mathbb{F}_n)$ for $n \geq 4$ satisfy $(\mathrm{FW})$. The reverse implication is however not true. For instance, it is shown in \cite{CornulierCommensurated} that $\mathrm{SL}(2,\mathbb{Z}[\sqrt{2}])$ satisfies $(\mathrm{FW})$ but not $(\mathrm{T})$. It does not satisfies $(\mathrm{T})$ because, as a lattice in $\mathrm{SL}(2,\mathbb{R})^2$, it is a-T-menable. And it satisfies $(\mathrm{FW})$ because it is boundedly generated by two distorted abelian subgroups, namely the subgroups of unipotent lower and upper triangular matrices. Distortion is the key phenomenon here and illustrates a fundamental difference between median and Hilbertian geometry. It has been highlighted in \cite{arXiv:0705.3386} in order to show that some Baumslag-Solitar groups provide examples of a-T-menable groups that do not act properly on median graphs. 
	
 It is conjectured in \cite{CornulierCommensurated} that the property $(\mathrm{FW})$ holds for every irreducible lattice in a connected semi-simple Lie group with no compact simple factor whose Lie algebra has $\mathbb{R}$-rank $\geq 2$. The example $\mathrm{SL}(2,\mathbb{Z}[\sqrt{2}])$ previously mentioned is one particular case of the conjecture. See \cite{MR3299841} for other positive results in this direction. It is also worth mentioning that \cite{MR3509968} proves that every irreducible lattice in semi-simple Lie group of rank $\geq 2$ satisfies $(\mathrm{FW}_\mathrm{fin})$. 
	
 It is proved in \cite{GLU} that purely elliptic actions of finitely generated groups on median graphs with no infinite cube are automatically elliptic. As a consequence, every finitely generated torsion group satisfies $(\mathrm{FW}_\omega)$. However, such groups may not satisfy $(\mathrm{FW})$. For instance, given an arbitrary finitely generated infinite torsion group $T$, the wreath product $\mathbb{Z}/2\mathbb{Z} \wr T$ naturally acts on $\bigoplus_T \mathbb{Z}/2\mathbb{Z}$, which can be identified with an infinite cube. From this observation, it can be deduced that some Burnside groups do not satisfy $(\mathrm{FW})$ (see the appendix of \cite{MR3786300} for more details, and the core of the article for an alternative construction). The most famous example of finitely generated infinite torsion group, namely the Grigorchuk group $\mathrm{Gr}$, is also of interest. From what we have already said, we know that $\mathrm{Gr}$ satisfies $(\mathrm{FW}_\omega)$. However, it does not satisfy $(\mathrm{FW})$. This follows from the existence of multi-ended Schreier graphs \cite{MR3027509} and Sageev's construction \cite{MR1347406}. In fact, it can be proved that $\mathrm{Gr}$ acts properly on an infinite cube \cite{GriCC}. 
	
 Motivated by the idea that median graphs of finite cubical dimension are ``built from hyperbolic spaces'', we proved in \cite{MR3918481} that $(\mathrm{FW}_\mathrm{fin})$ is implied by the hereditary property $(\mathrm{NL})$, which claims that no finite-index subgroup in the group under consideration admits an action on a geodesic hyperbolic space with a loxodromic element ($\mathrm{NL}$ stands for \emph{No Loxodromic}). Removing actions with loxodromic elements only allows elliptic and horocyclic actions, i.e.\ the types of actions that every infinite finitely generated group admits. As an application, it is proved in \cite{MR3918481} that Thompson's group $V$ satisfies $(\mathrm{FW}_\mathrm{fin})$. On the other hand, we know from \cite{MR2136028} that $V$ admits a proper action on a locally finite median graphs of infinite cubical dimension. Thus, $(\mathrm{FW}_\mathrm{fin})$ does not imply $(\mathrm{FW}_\omega)$. Other examples of hereditary $(\mathrm{NL})$ groups can be found in \cite{BFG} and references therein, including several Thompson-like groups.
	
 A useful strategy to construct groups satisfying $(\mathrm{FW}_n)$ for a specific $n \geq 1$ is given by the following Helly theorem: given a contractible topological space $X$ of topological dimension $\leq n$ and a collection contractible subspaces $S_1, \ldots, S_r \subset X$, if the intersection $S_{i_1} \cap \cdots \cap S_{i_{n+1}}$ is non-empty for all $1 \leq i_1, \ldots, i_{n+1} \leq r$, then $S_1 \cap \cdots \cap S_r \neq \emptyset$. As a consequence, if $G$ is a group acting on $X$ and if $G$ is generated by subgroups $H_1, \ldots, H_r$ having non-empty contractible fixed-point sets, then it suffices to verify that the subgroup $\langle H_{i_1}, \ldots, H_{i_{n+1}} \rangle$ fixes a point of $X$ for all $1 \leq i_1, \ldots, i_{n+1} \leq r$ in order to conclude that $G$ has a global fixed point. This strategy is known for trees \cite{MR1954121} and has been popularised for CAT(0) spaces by \cite{MR2555905}. More precisely, given an $n \geq 1$, \cite{MR2555905} introduces the fixed-point property $(\mathrm{FA}_n)$ for complete $n$-dimensional CAT(0) cell complexes and deduces from Helly theorem various fixed-point properties for arithmetic and Chevalley groups \cite{MR2555905}. Since then, the properties $(\mathrm{FA}_n)$ have also been investigated for Coxeter groups \cite{MR2263060}, mapping class groups \cite{MR2999128}, automorphism groups of free groups \cite{MR3426225}, and Thompson-like groups \cite{MR4028980}. Since cube-completions of median graphs admit CAT(0) metrics, $(\mathrm{FA}_n)$ implies $(\mathrm{FW}_n)$ for every $n \geq 1$. It is worth mentioning that there exist groups satisfying $(\mathrm{FW}_n)$ but not $(\mathrm{FA}_n)$. For instance, as mentioned by Corollary~\ref{cor:AffineCoxeter} below, the Euclidean Coxeter group $\widetilde{A}_n$ satisfies $(\mathrm{FW}_{n-1})$, but it clearly does not satisfy $(\mathrm{FA}_{n-1})$ as it acts properly and cocompactly on $\mathbb{R}^{n-1}$. 
	
 In Gromov's density model, it is known that random groups for high density satisfy Kazhdan's property $(\mathrm{T})$ \cite{MR1995802}, and a fortiori $(\mathrm{FW})$, but act properly and cocompactly on median graphs for low density \cite{MR2806688} (see also \cite{MR3359032, MR4549688, RandomCC}). But the cubical dimensions of the median graphs thus obtained remain to be investigated. In \cite{MR2755002}, it is proved that random groups (for arbitrary density) satisfy Serre's property $(\mathrm{FA})$, or equivalently $(\mathrm{FW}_1)$. The result has been recently improved in \cite{RandomFW}, proving that random groups satisfy $(\mathrm{FW}_2)$, and finally generalised in \cite{MR5000781} for arbitrary dimensions. Thus, for every $n \geq 1$, there exists torsion-free hyperbolic groups that are cocompactly cubulated but satisfy $(\mathrm{FW}_n)$. However, no explicit example is exhibited.
	
	Another source of hyperbolic groups with interesting fixed-point properties is given by the Rips construction from \cite{MR2276608}, where it is proved that, for every countable group $Q$, there exists a graphical C'(1/6) small cancellation group $G$ such that
$$1 \to N \to G \to Q \to 1$$
where $N$ is finitely generated and satisfies Kazhdan's property (T). In particular, $G$ satisfies a property $(\mathrm{FW}_\ast)$ if and only if $Q$ satisfies the same $(\mathrm{FW}_\ast)$ property. For instance, taking $Q:= \mathrm{SL}(2, \mathbb{Z}[\sqrt{2}])$, one gets a hyperbolic group $G$ satisfying $(\mathrm{FW})$ but not (T). 
	
Property $(\mathrm{FA})$ has been extensively studied in the literature, and we do not intend to record what is known on the subject since its introduction in \cite{MR1954121}. We only mention that it is known for many classical families of groups, including, among many others, mapping class groups of most surfaces, many linear groups, some Coxeter groups, Cremona groups, some wreath products, many automorphism of free products.

\medskip \noindent
Globally, property $(\mathrm{FW})$ and its relatives remain poorly understood, and many natural questions on the subject have not been investigated. We record in Section~\ref{section:Questions} some conjectures and open problems.

\paragraph{Main results.} In this article, our goal is to construct explicit examples of groups with fixed-point properties in some dimensions but good actions in higher dimensions. First of all, we consider groups that do not contain non-abelian free subgroups. It is worth mentioning that virtually abelian groups are already known to provide examples of groups that do not act properly and cocompactly on median graphs but that contain finite-index subgroups admitting such actions \cite{MR4298722, MR3245140}. Here, we show that these groups, and more generally groups with no $\mathbb{F}_2$, also provide examples with interesting fixed-point properties. More precisely, we prove the following characterisation:

\begin{thm}\label{thmIntro:VirtuallyAbelian}
Let $G$ be a finitely generated group with no $\mathbb{F}_2$. Given an $n \geq 1$, $G$ satisfies $(\mathrm{FW}_n)$ if and only if there is no subgroup $H \leq G$ of index $\leq n$ with a morphism $H \to \mathbb{D}_\infty$ whose image is infinite.
\end{thm}

\noindent
See Theorem~\ref{thm:VirtuallyAbelian} for a more detailed statement. As concrete examples, it follows that: 
\begin{itemize}
	\item The affine Coxeter group $\widetilde{A}_n$ satisfies $(\mathrm{FW}_{n})$; see Corollary~\ref{cor:AffineCoxeter}. 
	\item Given a transitive permutation group $F \curvearrowright [n]:=\{1, \ldots, n\}$, the permutational wreath product $\mathbb{D}_\infty \wr_{[n]} F$ satisfies $(\mathrm{FW}_{n-1})$; see Corollary~\ref{cor:WreathFW}.
	\item Simple amenable groups, such as those constructed in \cite{MR3071509}, satisfy $(\mathrm{FW})$. 
	\item Some branch groups, such as the alternate mother group studied in \cite{MR3330165}, satisfy~$(\mathrm{FW})$.
\end{itemize}
In fact, $\widetilde{A}_n$ (resp.\ $\mathbb{D}_\infty \wr_{[n+1]} \mathrm{Sym}([n+1])$) does not only admit an action without global fixed point on a median graph of cubical dimension $n+1$, but it admits a proper (resp.\ proper and cocompact) action on such a median graph. Thus, there is brutal gap between the behaviours of the group in dimensions $n$ and $n+1$. 

\medskip \noindent
Next, we investigate examples in another direction, namely groups with some negative curvature instead of amenable-like groups. See Conjecture~\ref{Conj}, and the discussion related to Conjecture~\ref{ConjSC}, for some motivation. 

\medskip \noindent
Recall that, given a graph $\Gamma$ and a collection of groups $\mathcal{G}= \{ G_u, u \in V(\Gamma)\}$ indexed by the vertices of $\Gamma$, the \emph{graph product} $\Gamma \mathcal{G}$ is the group having
$$\langle G_u, \ u \in V(\Gamma) \mid [G_u,G_v]=1, \ \{u,v\} \in E(\Gamma) \rangle$$
as a relative presentation, where $V(\Gamma)$ and $E(\Gamma)$ denote the vertex- and edge-sets of $\Gamma$, and where $[G_u,G_v]=1$ is a shorthand for: $[g,h]=1$ for all $g \in G_u$ and $h \in G_v$. If all the vertex-groups $G_u$ are isomorphic to the same group $G$, we denote by $\Gamma G$ or $\Gamma(G)$ the graph product $\Gamma \mathcal{G}$. Theorems~\ref{thm:GraphProdFW} and~\ref{thm:GraphProdFWstraight} provide a recipe to produce, given an integer $n \geq 1$, a group of the form
$$\Gamma (\mathbb{Z}/q \mathbb{Z}) \rtimes \mathrm{Isom}(\Gamma)$$
satisfying $(\mathrm{FW}_n)$, where $\mathrm{Isom}(\Gamma)$ acts on the graph product $\Gamma (\mathbb{Z}/q \mathbb{Z})$ by permuting the vertex-groups according to its action on $\Gamma$. We refer to such semidirect products as \emph{virtual graph products}. As concrete examples:
\begin{itemize}
	\item Let $\Gamma$ denote the join of $n$ copies of the graph having only two isolated vertices and let $q \geq 2$ be an integer with no divisor in $[2,n]$. The group $\Gamma(\mathbb{Z}/q\mathbb{Z}) \rtimes \mathrm{Isom}(\Gamma)$ admits a proper and cocompact action on a median graph of cubical dimension $n$ but satisfies $(\mathrm{FW}_{n-1})$. See Corollary~\ref{cor:CubulableButFW}.
	\item Let $r,s,n \geq 1$ satisfy $r \geq s \geq n$ and let $\Gamma$ denote the graph obtained from an $r$-cube by connecting with an edge any two vertices at distance $\leq s$. For every integer $q \geq 2$ with no divisor in $[2,n]$, $\Gamma(\mathbb{Z}/q\mathbb{Z}) \rtimes \mathrm{Isom}(\Gamma)$ is cocompactly cubulable but satisfies $(\mathrm{FW}_n)$. See Theorem~\ref{thm:GraphProdFWstraight} (and Corollary~\ref{cor:FWPlus}). 
\end{itemize}
It is worth mentioning that graph products themselves usually do not have good fixed-point properties. Indeed, since they split as amalgamated products, they usually admit actions on trees without global fixed points. Here, we show that, by twisting the group just a little bit, interesting fixed-point properties appear.

\paragraph{Acknowledgements.} I am grateful to Yves Cornulier for having pointed out \cite{MR2276608} to me, to J\'er\'emie Brieussel for an interesting discussion related to \cite{MR3330165}, and to an anonymous referee for having pointed out \cite{MR2999128} to me.

\section{Fixed-point properties for virtually abelian groups}

\noindent
This section is dedicated to the proof of the following criterion:

\begin{thm}\label{thm:VirtuallyAbelian}
Let $G$ be a finitely generated group without non-abelian free subgroup. For every $n \geq 1$, the following assertions are equivalent:
\begin{itemize}
	\item[(i)] $G$ satisfies $(\mathrm{FW}_n)$;
	\item[(ii)] $G$ contains a subgroup of index $\leq n$ that admits a morphism to $\mathbb{D}_\infty$ with infinite image;
	\item[(iii)] there exists a morphism $G \to \mathbb{D}_\infty^n \rtimes \mathrm{Sym}(n)$ with infinite image.
\end{itemize}
\end{thm}

\noindent
In Section~\ref{section:DirectionalComp}, we recall basic definitions and properties related to Roller boundaries of median graphs. We also introduce \emph{directional components}, which will play an important role in the proof of Theorem~\ref{thm:VirtuallyAbelian}, whose proof is contained in Section~\ref{section:ProofVAb}. Finally, we record in Section~\ref{section:FWamApp} a few examples of groups satisfying $(\mathrm{FW}_n)$ but not $(\mathrm{FW}_{n+1})$, obtained thanks to Theorem~\ref{thm:VirtuallyAbelian}.

\subsection{Roller boundary and directional components}\label{section:DirectionalComp}

\noindent
Let $X$ be a median graph. An \emph{orientation} $\sigma$ is a map that associates to every hyperplane one of the two halfspaces it delimits and that satisfies $\sigma(J_1) \cap \sigma(J_2) \neq \emptyset$ for all hyperplanes $J_1$ and $J_2$. The orientation is \emph{principal} if there exists a vertex $x \in X$ such that $\sigma$ sends every hyperplane to the halfspace containing $x$. The \emph{Roller completion} $\bar{X}$ of $X$ is the graph whose vertices are the orientations and whose edges connect two orientations whenever they differ on a single hyperplane. The map
$$\left\{ \begin{array}{ccc} X & \to & \bar{X} \\ x & \mapsto & \text{principal orientation given by } x \end{array} \right.$$
is a graph embedding whose image is a whole connected component. The \emph{Roller boundary} $\mathfrak{R}X$ of $X$ is the complement of $X$ in $\bar{X}$. 

\medskip \noindent
Given a connected component $Y$ of $\mathfrak{R}X$ and a basepoint $x \in X$, the \emph{horomorphism} $\mathfrak{h}_Y : \mathrm{stab}(Y) \to \mathbb{Z}$ is
$$\mathfrak{h} : g \mapsto |\mathcal{W}(x|Y) \backslash \mathcal{W}(gx|Y)| - | \mathcal{W}(gx|Y) \backslash \mathcal{W}(x|Y)|,$$
where $\mathcal{W}(\cdot|\cdot)$ denotes the set of hyperplanes separating two given subsets of orientations (in the sense that our orientations take distinct values on these hyperplanes). This is is a morphism, and it does not depend on the choice of $x \in X$. We refer to \cite{CornulierCommensurated, Book} for more details on horomorphisms.

\medskip \noindent
In the rest of this preliminary section, we introduce and study a specific family of components in the Roller boundary. They will play a central role in our proof of Theorem~\ref{thm:VirtuallyAbelian}. 

\begin{definition}
Let $X$ be a median graph. A \emph{direction} in $X$ is a decreasing sequence of halfspaces. Two directions $\mathscr{A}:=(A_i)_{i \geq 0}$ and $\mathscr{B}:=(B_i)_{i \geq 0}$ are 
\begin{itemize}
	\item \emph{independent} if $A_i \cap B_j = \emptyset$ for some $i, j \geq 1$;
	\item \emph{transverse} if there exists some $N \geq 1$ such that $A_i$ and $B_j$ are transverse for all $i, j \geq N$;
	\item \emph{nested}, denoted by $\mathscr{A} \sqsubset \mathscr{B}$, if, for every $i \geq 1$, there exists some $j \geq 1$ such that
$A_j \subset B_i$;
	\item \emph{equivalent} if $\mathscr{A} \sqsubset \mathscr{B}$ and $\mathscr{B} \sqsubset \mathscr{A}$. 
\end{itemize}
\end{definition}

\noindent
Given a median graph $X$, a direction $\mathscr{D}:= (D_i)_{i \geq 0}$, and a vertex $x \in V(X)$, we define the push of $x$ towards $\mathscr{D}$ as
$$\mathscr{D}x : J \mapsto  \left\{ \begin{array}{l} \text{halfspace containing some $D_i$ when possible}\\ \text{halfspace containing $x$ otherwise} \end{array} \right..$$
Observe that $\mathscr{D}x$ is an orientation. Indeed, otherwise we can find two hyperplanes $A$ and $B$ such that $\mathscr{D}x(A) \cap \mathscr{D}x(B)= \emptyset$. By construction, no $D_i$ can be contained in either $\mathscr{D}x(A)$ or $\mathscr{D}x(B)$, hence $x \in \mathscr{D}x(A) \cap \mathscr{D}x(B)= \emptyset$, a contradiction. 

\medskip \noindent
The key observation is that pushing all the vertices of a median graph towards a given direction yields an entire component of the Roller boundary. 

\begin{prop}\label{prop:DxComponent}
Let $X$ be a median graph and $\mathscr{D}:=(D_i)_{i \geq 0}$ a direction. Then,
$$\{ \mathscr{D}x \mid x \in V(X)\}$$
is the vertex-set of a connected component of $\mathfrak{R}X$. 
\end{prop}

\noindent
The following elementary observation of median geometry will be necessary in our proof of the proposition:

\begin{lemma}\label{lem:SepIntHalfspaces}
Let $X$ be a median graph and $D_1, \ldots, D_n$ pairwise intersecting halfspaces. Given a vertex $x \in V(X)$, if a hyperplane $J$ separates $x$ from $D_1 \cap \cdots \cap D_n$, then $J$ separates $x$ from some $D_i$. 
\end{lemma}

\begin{proof}
Let $J^+$ denote the halfspace delimited by $J$ that contains $x$. If $J$ does not separate $x$ from $D_i$ for every $i$, then $J^+ \cap D_i \neq \emptyset$ for every $i$. Then, it follows from the Helly property satisfied by convex subgraphs in median graphs that $J^+$ intersects $D_1 \cap \cdots \cap D_n$. This implies that $J$ does not separate $x$ from $D_1 \cap \cdots \cap D_n$. 
\end{proof}

\begin{proof}[Proof of Proposition~\ref{prop:DxComponent}.]
First of all, observe that pushing a vertex of $X$ towards $\mathscr{D}$ always leads to the same component of $\mathfrak{R}X$. This a consequence of the following observation:

\begin{claim}\label{claim:DxDiffHyp}
For all vertices $x,y \in V(X)$, $\mathscr{D}x$ and $\mathscr{D}y$ may only differ on hyperplanes separating $x$ and $y$. Consequently, $\mathscr{D}x$ and $\mathscr{D}y$ differ on only finitely many hyperplanes.
\end{claim}

\noindent
Let $J$ be a hyperplane on which $\mathscr{D}x$ and $\mathscr{D}y$ differ. No $D_i$ can be contained in a halfspace delimited by $J$, since otherwise $\mathscr{D}x(J)$ and $\mathscr{D}y(J)$ would be both this halfspace. Consequently, $\mathscr{D}x(J)$ (resp.\ $\mathscr{D}y(J)$) is the halfspace delimited by $J$ that contains $x$ (resp.\ $y$). It follows that $J$ separates $x$ and $y$, concluding the proof of Claim~\ref{claim:DxDiffHyp}.

\medskip \noindent
So far, we have proved that $\{\mathscr{D}x \mid x \in V(X)\}$ is contained in a single component of $\mathfrak{R}X$. It remains to verify that every vertex of this component is a $\mathscr{D}x$ for some vertex $x \in V(X)$. In other words, given an orientation $\sigma$ that differ on only finitely many hyperplanes with some (or equivalently, any) $\mathscr{D}x$, we want to prove that $\sigma = \mathscr{D}y$ for some $y \in V(X)$. 

\medskip \noindent
We claim that the vertex $y$ we are looking for is the projection of $x$ to 
$$C:= \bigcap\limits_{\sigma \text{ and } \mathscr{D}x \text{ differ on } J} \sigma(J).$$
Notice that $C$ is non-empty as a consequence of the Helly property satisfied by convex subgraphs in median graphs. We start by observation that:

\begin{claim}\label{claim:HypSepXandY}
Every hyperplane on which $\sigma$ and $\mathscr{D}x$ disagree separates $x$ and $y$.
\end{claim}

\noindent
Let $J$ be a hyperplane on which $\sigma$ and $\mathscr{D}x$. It is clear that the hyperplanes on which $\sigma$ and $\mathscr{D}X$ disagree separate $x$ from $C$, and a fortiori from $y$. Therefore, $J$ must separate $x$ and $y$, proving Claim~\ref{claim:HypSepXandY}. 

\medskip \noindent
Now, we are ready to prove that $\sigma$ coincides with $\mathscr{D}y$. In other words, given an arbitrary hyperplane $J$, we want to prove that $\sigma(J)= \mathscr{D}y(J)$. 

\medskip \noindent
If $J$ does not separate $x$ and $y$, then we know from Claim~\ref{claim:HypSepXandY} that $\sigma$ and $\mathscr{D}x$ agree at $J$. Moreover, it follows from Claim~\ref{claim:DxDiffHyp} that $\mathscr{D}x$ and $\mathscr{D}y$ also agree at $J$. Hence $\sigma(J)= \mathscr{D}x(J)= \mathscr{D}y(J)$, as desired. 

\medskip \noindent
Next, assume that $J$ does separate $x$ and $y$. Because $y$ is the projection of $x$ to $C$, necessarily $J$ separates $x$ from $C$. According to Lemma~\ref{lem:SepIntHalfspaces}, $J$ separates $x$ from some hyperplane $H$ on which $\sigma$ and $\mathscr{D}x$ disagree. Let $J^-$ and $H^-$ (resp.\ $J^+$ and $H^+$) denote the halfspaces respectively delimited by $J$ and $H$ that contain $C$ (resp.\ do not contain $C$). By definition of $C$, we must have $\sigma(H)=H^+$, which implies that $\mathscr{D}x(H)=H^-$. Since $\sigma(H)=H^+ \subset J^+$, necessarily $\sigma(J)=J^+$. Since $y$ belongs to $J^+$, the only possibility for $\mathscr{D}y(J)$ to be equal to $J^-$ is that some $D_i$ is contained in $J^-$. If so, $\sigma$ and $\mathscr{D}x$ must disagree on every hyperplane bounding $D_j$ for $j \geq i$, which is impossible since $\sigma$ and $\mathscr{D}x$ disagree on only finitely many hyperplanes. Thus, we must have $\mathscr{D}y(J)=J^+= \sigma(J)$, as desired. 
\end{proof}

\noindent
Proposition~\ref{prop:DxComponent} allows us to define:

\begin{definition}
Let $X$ be a median graph. A component $Y$ of $\mathfrak{R}X$ is \emph{directional} if there exists a direction $\mathscr{D}$ such that $\{ \mathscr{D}x \mid x \in V(X)\}$ is the vertex-set of $Y$. 
\end{definition}

\noindent
According to Proposition~\ref{prop:DxComponent}, every direction defines a directional component. Moreover, it is clear that:

\begin{fact}\label{fact:DirectionEqui}
Two equivalent directions define the same direction components. \qed
\end{fact}

\noindent
We conclude this section with three preliminary statements related to directional components, namely Corollary~\ref{cor:FinitelyMinComp} and Lemmas~\ref{lem:DirMin} and~\ref{lem:HoroNotTrivial}. Corollary~\ref{cor:FinitelyMinComp} will be a straightforward consequence of the following observation:

\begin{prop}\label{prop:FiniteDirections}
Let $X$ be a median graph. If $\mathscr{A}_1, \ldots, \mathscr{A}_k$ is a collection of directions that pairwise are neither equivalent nor independent, then $k \leq \dim_\square(X)$.
\end{prop}

\noindent
Our proposition is an easy consequence of our next two lemmas.

\begin{lemma}\label{lem:NestedDichot}
Let $X$ be a median graph and $\mathscr{A}, \mathscr{B}$ two directions. If $\mathscr{A}$ and $\mathscr{B}$ are neither independent nor transverse, then $\mathscr{A} \sqsubset \mathscr{B}$ or $\mathscr{B} \sqsubset \mathscr{A}$.
\end{lemma}

\begin{proof}
Write $\mathscr{A}$ as $(A_i)_{i\geq 1}$ and $\mathscr{B}$ as $(B_i)_{i\geq 1}$. Because $\mathscr{A}$ and $\mathscr{B}$ are neither independent nor transverse, we know that, for every $n \geq 1$, there exist $i(n), j(n) \geq n$ such that
$A_{i(n)} \subset B_{j(n)}$ or $A_{i(n)} \subset B_{j(n)}$. If the first inclusion occurs for infinitely many indices, then, given a $k \geq 1$, there exists some $n \geq 1$ such that $i(n), j(n) \geq k$ and $A_{i(n)} \subset B_{j(n)}$, hence $B_k \supset B_{j(n)} \supset A_{i(n)}$. Thus, we have $\mathscr{A} \sqsubset \mathscr{B}$. Similarly, if the second inclusion occurs for infinitely many indices, then $\mathscr{B} \sqsubset \mathscr{A}$.
\end{proof}

\begin{lemma}\label{lem:NestedTrans}
Let $X$ be a median graph and $\mathscr{A} = (A_i)_{i\geq 1}, \mathscr{B} = (B_i)_{i\geq 1}$ two directions. If $\mathscr{A},\mathscr{B}$ are not equivalent but $\mathscr{A} \sqsubset \mathscr{B}$, then there exists some $p \geq 1$ such that, for every $i \geq p$, there exists some $q \geq 1$ such that $A_i$ is transverse to $B_j$ for every $j \geq q$.
\end{lemma}

\noindent
The conclusion should be read as follows: up to removing finitely many halfspaces from $\mathscr{A}$, every halfspace in $\mathscr{A}$ is transverse to all but finitely many halfspaces in $\mathscr{B}$.

\begin{proof}[Proof of Lemma~\ref{lem:NestedTrans}.]
Because $\mathscr{A} \sqsubset \mathscr{B}$, we know that, for every $n \geq 1$, there exists some $j(n) \geq 1$ such that $B_n \supset A_{j(n)}$. And, because we do not have $\mathscr{B} \sqsubset \mathscr{A}$, we know that there exists some $p \geq 1$ such that $A_p$ does not contain any $B_j$. Fix some $i \geq p$. Up to removing finitely many halfspaces from $\mathscr{B}$, we assume without loss of generality that $j(n) \geq i$ for every $n \geq 1$. Let $n$ the smallest index such that $B_n$ is not transverse to $A_i$. Then, for every $m \geq 1$ such that $B_m$ is not transverse to $A_i$, we must have $A_i \subset B_m \subset B_n$. Because there can exist only finitely many such intermediate halfspaces, it follows that
$B_m$ is transverse to $A_i$ for all but finitely many $m$.
\end{proof}

\begin{proof}[Proof of Proposition~\ref{prop:FiniteDirections}.]
As a consequence of Lemma~\ref{lem:NestedDichot}, for all distinct $1 \leq i,j \leq n$, $\mathscr{A}_i$ and $\mathscr{A}_j$ are either transverse or nested. We deduce from Lemma~\ref{lem:NestedTrans} that we can find halfspaces $D_1 \in \mathscr{A}_1, \ldots , D_k \in \mathscr{A}_k$ that are pairwise transverse, hence $k \leq \dim_\square(X)$ as desired.
\end{proof}

\noindent
We are now ready to state and prove our first preliminary statement related to directional components. In the sequel, given a median graph $X$ and two subsets $R,S \subset \mathfrak{R}X$, 
\begin{itemize}
	\item we say that $\mathscr{D}:=(D_i)_{i \geq 0}$ is a direction to $R$ if $\sigma(\text{hyperplane bounding } D_i)=D_i$ for every $i \geq 0$;
	\item we denote by $R \leq S$ if every direction to $R$ is a direction to $S$. 
\end{itemize}

\begin{cor}\label{cor:FinitelyMinComp}
Let $X$ be a median graph of finite cubical dimension and $S \subset \mathfrak{R}X$ a non-empty subset. There are at most $\dim_\square(X)$ directional components $Y \subset \mathfrak{R}X$ satisfying $Y \leq S$. 
\end{cor}

\begin{proof}
Let $Y_1, \ldots, Y_n$ be pairwise distinct directional components satisfying $Y_1, \ldots, Y_n \leq S$. For every $1 \leq i \leq n$, let $\mathscr{D}_i$ be a direction defining $Y_i$. It follows from Fact~\ref{fact:DirectionEqui} that $\mathscr{D}_1,\ldots, \mathscr{D}_n$ are pairwise non-equivalent. Moreover, since $Y_i \leq S$ for every $1 \leq i \leq n$, necessarily no two $Y_i$ can be independent. Thus, Lemma~\ref{prop:FiniteDirections} implies that $n \leq \dim_\square(X)$. 
\end{proof}

\noindent
We now prove our last two preliminary statements related to directional components.

\begin{lemma}\label{lem:DirMin}
Let $X$ be a median graph and $\mathscr{D}:=(D_i)_{i \geq 0}$ a direction. Let $Y$ denote the directional component defined by $\mathscr{D}$. Given a subset $S \subset \mathfrak{R}X$, if $S \subset D_i$ for every $i \geq 0$, then $Y \leq S$. 
\end{lemma}

\begin{proof}
Let $\mathscr{A}$ be an arbitrary direction to $Y$. Given a halfspace $A \in \mathscr{A}$, we want to prove that $S \subset A$. If there exists some $i \geq 0$ such that $D_i \subset A$, then clearly $S \subset A$. Otherwise, assume that $D_i \nsubseteq A$ for every $i \geq 0$. Fix a vertex $x \in V(X)$ notin $A$. Then, if $J$ denotes the hyperplanes bounding $A$, we have $\mathscr{D}x(J) \neq A$. Thus, on the one hand, because $Y \subset A$ as $\mathscr{A}$ is a direction to $Y$, $Y$ is disjoint from $\mathscr{D}x(J)$; on the other hand, $\mathscr{D}x(J)$ must belong to $Y$ since $\mathscr{D}$ defines $Y$. A contradiction. 
\end{proof}

\begin{lemma}\label{lem:HoroNotTrivial}
Let $X$ be a median graph of finite cubical dimension, $\mathscr{D}:= (D_i)_{i \geq 0}$ a direction, and $g \in \mathrm{Isom}(X)$ a loxodromic isometry. Let $Y$ denote the directional component defined by $\mathscr{D}$. If $g$ stabilises $Y$ and has an axis crossing each $D_i$, then $\mathfrak{h}_Y(g) = \pm \|g\|$. 
\end{lemma}

\begin{proof}
Fix an axis $\gamma$ of $g$ that crosses each $D_i$ and a vertex $o$ on $\gamma$. We have
$$\pm \mathfrak{h}_Y(g)= |\mathcal{W}(o|Y) \backslash \mathcal{W}(go|Y)| - |\mathcal{W}(go|Y)\backslash \mathcal{W}(o|Y)|.$$
We claim that $\mathcal{W}(go|Y) \backslash \mathcal{W}(o|Y)=\emptyset$. Since
$$\mathcal{W}(o|go) = \mathcal{W}(o|Y) \backslash \mathcal{W}(go|Y) \sqcup \mathcal{W}(go|Y)\backslash \mathcal{W}(o|Y),$$
this will prove that $\pm \mathfrak{h}_Y(g)= |\mathcal{W}(o|go)| = d(o,go) = \|g\|$, as desired. 

\medskip \noindent
Assume for contradiction that there exists some hyperplane $J \in \mathcal{W}(go|Y) \backslash \mathcal{W}(o|Y)$. Let $D$ denote the halfspace delimited by $J$ that contains $go$. Because $X$ has finite cubical dimension, we can find a $\langle g \rangle$-translate of $J$, say $g^NJ$, that is not transverse to $J$ and such that $g^ND$ contains $o$. Notice that, since $D$ is disjoint from $Y$ and because $Y$ is stabilised by $g$, $g^ND$ must be disjoint from $Y$. Since $\mathscr{D}o \in Y$, this implies that $\mathscr{D}o(J) \neq g^nD$. As $o$ belongs to $g^N D$, necessarily some $D_i$ must be contained in the complement of $g^ND$, which is impossible since $g^ND \cap D_i$ contains a half-ray of $\gamma$ for every $i \geq 0$. 
\end{proof}

\subsection{Proof of the criterion}\label{section:ProofVAb}

\noindent
In this section, our goal is to prove Theorem~\ref{thm:VirtuallyAbelian}. We start by stating and proving a few preliminary observations. First of all:

\begin{fact}\label{fact:VirtuallyFW}
If a group $G$ contains a subgroup of index $k$ that does not satisfy $(\mathrm{FW}_n)$, then $G$ does not satisfy $(\mathrm{FW}_{nk})$. 
\end{fact}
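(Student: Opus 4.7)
The plan is to induce the bad $H$-action to a bad $G$-action via a Cartesian product construction. Concretely, I would start by fixing a median graph $X$ of cubical dimension at most $n$ on which $H$ acts without stabilising any cube --- such an action exists precisely because $H$ fails $(\mathrm{FW}_n)$. I would then fix a transversal $\{g_1 = e, g_2, \ldots, g_k\}$ of $H$ in $G$ and consider $Y := X^k$, thought of as the set of functions $G/H \to X$. The $G$-action is the standard induced one: for $g \in G$, writing $g g_i = g_{\sigma_g(i)} h_{g,i}$ with $h_{g,i} \in H$, the element $g$ permutes the coordinates by $\sigma_g$ and twists each coordinate by the corresponding $h_{g,i}$ acting on $X$.

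The next step is to observe that $Y$ is itself a median graph, since the Cartesian product of median graphs is median, and that its cubical dimension is at most $nk$, because any cube of $Y$ decomposes as a product of cubes in the factors. Thus, showing that $G$ fixes no cube in $Y$ will witness the failure of $(\mathrm{FW}_{nk})$.

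To conclude, I would argue by contradiction: suppose $G$ stabilises a cube $C \subseteq Y$. Using the product decomposition $C = C_1 \times \cdots \times C_k$, the subgroup $H$ in particular stabilises $C$; and since $g_1 = e$, the subgroup $H$ preserves the coset $eH$, hence preserves the first factor of $Y$, acting on it via the original $H$-action on $X$. This would force $H$ to stabilise $C_1 \subseteq X$ in the original action, contradicting the choice of $X$.

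I do not expect a real obstacle here: the argument is essentially a direct transport of the bad action of $H$ to a bad action of $G$. The only items requiring care are the verification that the formula above defines a genuine group action (the cocycle identity for the $h_{g,i}$'s must match up) and the decomposition of cubes of a Cartesian product of median graphs as products of cubes in the factors; both are standard.
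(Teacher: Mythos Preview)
Your proposal is correct and follows exactly the route the paper indicates: the paper's proof is a one-sentence appeal to the well-known induced action of $G$ on $X^{[G:H]}$, and you have simply written out the details of that construction and verified that a stabilised cube in the product would project to a stabilised cube in the first factor. Nothing more is needed.
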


\noindent
Indeed, it is well known that, if a subgroup $H \leq G$ of index $k$ admits an action on a median graph $X$ (or, more generally, on an arbitrary metric space), then $G$ naturally acts on $X^{[G:H]}$. This proves Fact~\ref{fact:VirtuallyFW}.

\medskip \noindent
Next, we prove the equivalence between (ii) and (iii) in Theorem~\ref{thm:VirtuallyAbelian}, namely:

\begin{lemma}\label{lem:FWasMorph}
A group $G$ contains a subgroup of index $\leq n$ admitting a morphism to $\mathbb{D}_\infty$ with infinite image if and only if there exists a morphism $G \to \mathbb{D}_\infty^n \rtimes \mathrm{Sym}(n)$ with infinite image.
\end{lemma}

\begin{proof}
Assume that there exists a morphism $\varphi : G \to \mathbb{D}_\infty^n \rtimes \mathrm{Sym}(n)$ with infinite image. Thus, there exists some $g \in G$ such that $\varphi(g)=(g_1, \ldots, g_n,\sigma)$ with $g_i$ infinite-order for some $1 \leq i \leq n$. Let $F_i$ denote the index-$n$ subgroup $\mathrm{D}_\infty^n \rtimes \mathrm{Fix}(i)$ of $\mathbb{D}_\infty^n \rtimes \mathrm{Sym}(n)$. Notice that $F_i$ is naturally isomorphic to $\mathbb{D}_\infty \times ( \mathbb{D}_\infty^{n-1} \rtimes \mathrm{Sym}(n-1))$, hence a surjective morphism $\psi : F_i \twoheadrightarrow \mathbb{D}_\infty$. Now, $\varphi^{-1}(F_i)$ defines a subgroup of $G$ of index $\leq n$ whose image under $\psi \circ \varphi$ is infinite as $\varphi(g^{n!})=(g_1^{n!}, \ldots, g_n^{n!},\mathrm{id})$ belongs to $F_i$ and $\psi( \varphi(g^{n!}))=g_i^{n!}$ has infinite order in $\mathbb{D}_\infty$.

\medskip \noindent
Conversely, assume that $G$ contains a subroup $H$ of index $\leq n$ admitting a morphism to $\mathbb{D}_\infty$ with infinite image. The last condition amounts to saying that $H$ acts on the graph $\mathbb{Z}$ with unbounded orbits. It is well-known that such an action of $H$ can be promoted to an action of $G$ on the graph $\mathbb{Z}^{G/H}$ with unbounded orbits. Since the isometry group of $\mathbb{Z}^{G/H}$ is isomorphic to $\mathbb{D}_\infty^{[G:H]} \rtimes \mathrm{Sym}([G:H])$, which embeds into $\mathbb{D}_\infty^n \rtimes \mathrm{Sym}(n)$ as $n \geq [G:H]$, the desired conclusion follows. 
\end{proof}

\noindent
Our next preliminary lemma shows how to extend a morphism to $\mathbb{Z}$ defined on an index-$2$ subgroup to a morphism to the infinite dihedral group $\mathbb{D}_\infty$ on the whole group. 

\begin{lemma}\label{lem:MorphExtension}
Let $G$ be a group, $H \leq G$ an index-$2$ subgroup, and $\varphi : H \to \mathbb{Z}$ a morphism. If there exists some $a \in G \backslash H$ such that $\varphi(a^2)=0$ and such that $\varphi(aha^{-1})=- \varphi(h)$ for every $h \in H$, then $\varphi$ extends to a morphism $G \to \mathbb{Z} \rtimes \mathbb{Z}_2 \simeq \mathbb{D}_\infty$. 
\end{lemma}

\begin{proof}
Since $G= H \sqcup Ha$, we can define the map
$$\psi : \left\{ \begin{array}{ccc} G & \to & \mathbb{Z} \rtimes \mathbb{Z}_2 \\ h & \mapsto & (\varphi(h),1) \\ ha & \mapsto & (\varphi(h),-1) \end{array} \right.$$
where $\mathbb{Z}_2$ is thought of as the multiplicative group $\{\pm 1\}$. Clearly, $\psi$ extends $\varphi$. It remains to verify that $\psi$ is a morphism. For all $h_1,h_2 \in H$, we have
$$\psi(h_1h_2) = (\varphi(h_1h_2),1) = (\varphi(h_1),1)(\varphi(h_2),1) = \psi(h_1)\psi(h_2);$$
and
$$\psi(h_1 \cdot h_2a) = (\varphi(h_1h_2),-1) = (\varphi(h_1),1)(\varphi(h_2), -1)= \psi(h_1)\psi(h_2a);$$
and
$$\begin{array}{lcl} \psi(h_1a \cdot h_2) & = & \psi(h_1ah_2a^{-1}a) = (\varphi(h_1ah_2a^{-1}),-1) = (\varphi(h_1)-\varphi(h_2), -1) \\ \\ & = & (\varphi(h_1),-1) (\varphi(h_2),1) = \psi(h_1a) \psi(h_2)); \end{array}$$
and finally
$$\begin{array}{lcl} \psi(h_1a \cdot h_2a) & = & \psi ( h_1ah_2a^{-1}a^2 ) = (\varphi(h_1ah_2a^{-1}a^2), 1) = (\varphi(h_1)- \varphi(h_2), 1) \\ \\ & = & (\varphi(h_1),-1)(\varphi(h_2),-1) = \psi(h_1a) \psi(h_2a). \end{array}$$
We conclude, as desired, that $\psi$ defines a morphism $G \to \mathbb{D}_\infty$ that extends $\varphi$. 
\end{proof}

\noindent
Typically, Lemma~\ref{lem:MorphExtension} will be useful in the following situation. If a group $G$ acts on a median graph $X$ and stabilises a pair $\{\alpha,\omega\}$ of vertices in the Roller boundary, then $G$ contains a subgroup $H$ of index $\leq 2$ that fixes $\alpha$. Thus, the horomorphism $\mathfrak{h}_\alpha$ induces a morphism $H \to \mathbb{Z}$. Lemma~\ref{lem:MorphExtension} will allow us to promote this morphism to a morphism $G \to \mathbb{D}_\infty$. In order to apply Lemma~\ref{lem:MorphExtension}, the following observation will be necessary:

\begin{lemma}\label{lem:Switch}
Let $X$ be a median and $g \in \mathrm{Isom}(X)$ a loxodromic isometry. Assume that there exist two vertices $\alpha,\omega \in \mathfrak{R}X$ such that $I(\alpha,\omega) \cap X \neq \emptyset$ and such that $g$ switches $\alpha$ and $\omega$. Then $\mathfrak{h}_\alpha(g^2)=0$.
\end{lemma}

\begin{proof}
Up to replacing $X$ with $X \cap I(\alpha,\omega)$, we can assume for convenience that $X=X \cap I(\alpha,\omega)$, which amounts to saying that every hyperplane of $X$ separates $\alpha$ and $\omega$. Fix a vertex $o$ on an axis of $g$. By definition of $\mathfrak{h}_\alpha$, we have
$$\begin{array}{lcl} \mathfrak{h}_\alpha(g^2) & = & |\mathcal{W}(o|\alpha)\backslash \mathcal{W}(g^2o|\alpha)| - |\mathcal{W}(g^2o|\alpha)\backslash \mathcal{W}(o|\alpha)| \\ \\ & = & |\mathcal{W}(o|g^2o,\alpha)| - |\mathcal{W}(g^2o| o , \alpha)| \end{array}$$
On the one hand, 
$$\begin{array}{lcl} \mathcal{W}(o|g^2o,\alpha) & = & \mathcal{W}(o, \omega|g^2o,\alpha) = \mathcal{W}(o,go,\omega|g^2o,\alpha) \sqcup \mathcal{W}(o,\omega|go,g^2o,\alpha) \\ \\ &= &  \mathcal{W}(go,\omega|g^2o,\alpha) \sqcup \mathcal{W}(o,\omega|go,\alpha) \\ \\ & = & g \mathcal{W}(o,\alpha | go,\omega) \sqcup \mathcal{W}(o,\omega | go, \alpha) \end{array}$$
and
$$\begin{array}{lcl} \mathcal{W}(g^2o|o,\alpha) & = & \mathcal{W}(g^2o,\omega|o,\alpha) =  \mathcal{W}(g^2o,go,\omega |o,\alpha) \sqcup \mathcal{W}(g^2o, \omega|o,go,\alpha) \\ \\ & = &  \mathcal{W}(go,\omega |o,\alpha) \sqcup \mathcal{W}(g^2o, \omega|go,\alpha) \\ \\ & = &  \mathcal{W}(go,\omega |o,\alpha) \sqcup g \mathcal{W}(go, \alpha | o,\omega) \end{array}$$
We conclude that $\mathfrak{h}_\alpha(g^2)=0$, as desired.
\end{proof}

\begin{proof}[Proof of Theorem~\ref{thm:VirtuallyAbelian}.]
Assume that $G$ contains a subgroup $H$ of index $\leq n$ that admits a morphism to $\mathbb{D}_\infty$ with infinite image. Then, $H$ acts on the bi-infinite line $\mathbb{E}^1$ with unbounded orbits, which can be promoted to an action of $G$ on $\mathbb{E}^{[G:H]}$ with unbounded orbits according to Fact~\ref{fact:VirtuallyFW}. Thus, $G$ does not satisfy $(\mathrm{FW}_n)$.

\medskip \noindent
Conversely, assume that $G$ does not satisfy $(\mathrm{FW}_n)$. Let $X$ be a median graph of cubical dimension $\leq n$ on which $G$ acts with unbounded orbits. Up to replacing $X$ with its cubical subdivision, we assume that $G$ acts without hyperplane-inversion. Because $G$ has no free subgroup of rank $\geq 2$, it must have a finite orbit in the Roller boundary of $X$ \cite{MR2827012} (see \cite[Theorem~2.10]{MR4062290} for an explicit statement). Since $X \cup \mathfrak{R}X$ is a median algebra (of rank $\leq n$), it follows from \cite[Lemma~21.1.1]{BowMedian} that $G$ stabilises a median cube $Q \subset \mathfrak{R}X$, necessarily of dimension $\leq \dim_\square(X)$. 

\medskip \noindent
Because $G$ is finitely generated and acts on $X$ with unbounded orbits, we know from \cite{MR1347406} (see \cite{MR4162939} for details) that $G$ contains a loxodromic element, say $g \in G$. We fix a hyperplane $J$ that crosses an axis of $g$. Up to replacing $g$ with a non-trivial power, we can assume that the $g^nJ$, $n \in \mathbb{Z}$ are pairwise non-transverse. In other words, $\{g^nJ \mid n \in \mathbb{Z}\}$ defines a bi-infinite chain of hyperplanes. We denote by $J^-$ and $J^+$ the halfspaces delimited by $J$ so that $gJ^+ \subset J^+$. We set
$$\mathfrak{R}_\pm(g):= \{ \sigma \in \mathfrak{R}X \mid \sigma(g^nJ)= g^nJ^\pm \text{ for every } n \in \mathbb{Z}\}$$
and $\mathfrak{R}(g):= \mathfrak{R}_-(g) \sqcup \mathfrak{R}_+(g)$. 

\begin{claim}\label{claim:QinRg}
We have $Q \subset \mathfrak{R}(g)$. 
\end{claim}

\noindent
It suffices to verify that, if $\sigma \in \mathfrak{R}X \backslash \mathfrak{R}(g)$, then the orbit $\langle g \rangle \cdot x$ is infinite. Since $\sigma \notin \mathfrak{R}(g)$, we can find $n \in \mathbb{Z}$ such that $\sigma(g^kJ)=g^kJ^+$ for every $k \leq n$ and $\sigma(g^{k}J)=g^{k}J^-$ for every $k \geq n+1$. In other words, $\sigma$ lies between $g^nJ$ and $g^{n+1}J$. Then, for every $i \in \mathbb{Z}$, $g^i\sigma$ lies between $g^{n+i}J$ and $g^{n+i+1}J$, proving that the $g^i \sigma$ are pairwise distinct. Thus, Claim~\ref{claim:QinRg} is proved. 

\medskip \noindent
Now, we distinguish two cases, depending on how $Q$ sits inside $\mathfrak{R}(g)$.

\medskip \noindent
First, assume that $Q \subset \mathfrak{R}_+(g)$ or $\mathfrak{R}_-(g)$. Up to replacing $g$ with $g^{-1}$, we can assume that $Q \subset \mathfrak{R}_+(g)$. Let $Y$ denote the directional component given by $\{g^nJ^+ \mid n \geq 0\}$. We know from Lemma~\ref{lem:DirMin} that $Y \leq Q$, so we deduce from Corollary~\ref{cor:FinitelyMinComp} that $G$ contains a subgroup $H$ of index $\leq n$ stabilising $Y$. Up to replacing $g$ with a non-trivial power that belongs to $H$, it follows from Lemma~\ref{lem:HoroNotTrivial} that $\mathfrak{h}_Y(g) \neq 0$. Thus, $H$ surjects onto $\mathbb{Z}$ through the horomorphism $\mathfrak{h}_Y$. 

\medskip \noindent
Next, assume that $Q$ has a non-empty intersection with both $\mathfrak{R}_-(g)$ and $\mathfrak{R}_+(g)$. Because $\mathfrak{R}_-(g)$ and $\mathfrak{R}_+(g)$ are both convex in the median algebra $\overline{X}$, the partition $Q = (Q \cap \mathfrak{R}_-(g)) \sqcup (Q \cap \mathfrak{R}_+(g))$ must correspond to two complementary halfspaces. In other words, $Q$ contains a codimension-one subcube $Q_-$ in $\mathfrak{R}_-(g)$ with its opposite codimension-one subcube $Q_+$ in $\mathfrak{R}_+(g)$. Since $Q$ contains $\leq n$ pairs of opposite facets, $G$ must contain a subgroup $H$ of index $\leq n$ that stabilises the pair $\{Q_-,Q_+\}$. Our goal is to prove that there exists a morphism $H \to \mathbb{D}_\infty$ with infinite image. For this, we want to apply Lemma~\ref{lem:MorphExtension} to the subgroup $H_0 \leq H$ of index $\leq 2$ that stabilises both $Q_-$ and $Q_+$. 

\medskip \noindent
Let $Z$ be the median graph obtained by cubulating the wallspace $(X, \mathcal{W}(Q_-|Q_+))$. In $Z$, the cubes $Q_-$ and $Q_+$ becomes respectively two vertices $\xi_-$ and $\xi_+$ in $\mathfrak{R}X$; $g$ remains loxodromic; and $\xi_- \in \mathfrak{R}_-(g)$, $\xi_+ \in \mathfrak{R}_+(g)$. Let $\mathfrak{h}_\pm$ denote the horomorphism given by $\xi_\pm$. Notice that $g \in H_0$ and $\mathfrak{h}_\pm(g) \neq 0$. Moreover, $\mathfrak{h}_-=-\mathfrak{h}_+$ on $H_0$. Indeed, since we know, by construction of $Z$, that all the hyperplanes in $Z$ separate $\xi_-$ and $\xi_+$, being separated from $\xi_-$ by some hyperplane $J$ amounts to not being separated from $\xi_+$ by $J$, which implies that 
$$\begin{array}{lcl} \mathfrak{h}_-(h) & = & |\mathcal{W}(o|\xi_-) \backslash \mathcal{W}(ho|\xi_-)| - | \mathcal{W}(ho|\xi_-)\backslash \mathcal{W}(o|\xi_-)| \\ \\ & = & |\mathcal{W}(ho|\xi_+) \backslash \mathcal{W}(o|\xi_-)| -  |\mathcal{W}(o|\xi_+) \backslash \mathcal{W}(ho|\xi_+)| \\ \\ &= & -\mathfrak{h}_+(h), \end{array}$$
where $o \in V(X)$ is a basepoint and $h \in H_0$ an arbitrary element. 

\medskip \noindent
If $H=H_0$, then we know that $H$ surjects onto $\mathbb{Z}$ and we are done. From now on, assume that $H_0 \subsetneq H$. 

\medskip \noindent
Fix an element $a \in H \backslash H_0$. In other words, $a$ switches $Q_-$ and $Q_+$. First, notice that $\mathfrak{h}_+(a^2)=0$. Indeed, the equality is clear if $a$ fixes a vertex, and it follows from Lemma~\ref{lem:Switch} if $a$ is loxodromic. Next, let $h \in H_0$ be an arbitrary element. Given a basepoint $o \in V(X)$, we have
$$\begin{array}{lcl} \mathfrak{h}_+(aha^{-1}) & = & |\mathcal{W}(o|\xi_+) \backslash \mathcal{W}(aha^{-1}o|\xi_+)| - |\mathcal{W}(aha^{-1}o|\xi_+) \backslash \mathcal{W}(o,\xi_+)| \\ \\ & = & |\mathcal{W}(a^{-1}o|a^{-1}\xi_+) \backslash \mathcal{W}(ha^{-1}o|a^{-1}\xi_+)| - |\mathcal{W}(ha^{-1}o|a^{-1} \xi_+) \backslash \mathcal{W}(a^{-1}o|a^{-1}\xi_+)| \\ \\ &= & |\mathcal{W}(a^{-1}o|\xi_-) \backslash \mathcal{W}(ha^{-1}o|\xi_-)| - |\mathcal{W}(ha^{-1}o| \xi_-) \backslash \mathcal{W}(a^{-1}o|\xi_-)| \\ \\ & = & \mathfrak{h}_-(h) = - \mathfrak{h}_+(h)
\end{array}$$
Thus, Lemma~\ref{lem:MorphExtension} applies and shows that $\mathfrak{h}_+ : H_0 \to \mathbb{Z}$ extends to a morphism $H \to \mathbb{D}_\infty$, whose image is necessarily infinite. This concludes the proof of our theorem. 
\end{proof}

\begin{remark}
It is worth mentioning that the second paragraph of our proof provides a generalisation of \cite[Corollary (Tits' alternative)]{MR4861512}. Namely, if a group with no $\mathbb{F}_2$ acts on a median graph of finite cubical dimension, then $G$ has an orbit of size $2^N$ in $\overline{X}$ for some $N \leq \mathrm{dim}_\square(X)$. Indeed, it follows from \cite[Theorem~2.10]{MR4062290} that $G$ has a finite orbit in $\overline{X}$. Since $\overline{X}$ is a median algebra of rank $\leq n$, it follows from \cite[Lemma~21.1.1]{BowMedian} that $G$ stabilises a median cube. Then, the desired conclusion follows. 
\end{remark}

\subsection{Applications}\label{section:FWamApp}

\noindent
As a concrete application of Theorem~\ref{thm:VirtuallyAbelian}, we investigate the properties $(\mathrm{FW}_n)$ for two families of virtually abelian groups. 

\begin{cor}\label{cor:WreathFW}
For every transitive permutation group $F \curvearrowright [n]:=\{1, \ldots, n\}$, the group $\mathbb{D}_\infty \wr_{[n]} F$ satisfies $(\mathrm{FW}_{n-1})$.
\end{cor}

\noindent
Recall that, given a group $A$ and a group $B$ acting on a set $S$, the \emph{permutational wreath product} $A \wr_S B$ is the semidirect product $\left( \bigoplus_S A \right) \rtimes B$ where $B$ acts on the direct sum by permuting the coordinates according to its action on $S$. 

\medskip \noindent
It is worth mentioning that Corollary~\ref{cor:WreathFW} also follows from \cite{MR2999128}, since it can be deduced from \cite[Corollaries~3.5 or~3.6]{MR2999128} that, more generally, $\mathbb{D}_\infty \wr_{[n]} F$ satisfies the fixed-point property on complete CAT(0) spaces of dimension $<n$. 

\begin{proof}[Proof of Corollary~\ref{cor:WreathFW}.]
As a consequence of Theorem~\ref{thm:VirtuallyAbelian} and Lemma~\ref{lem:FWasMorph}, in order to justify that $G$ has $(\mathrm{FW}_{n-1})$, it suffices to show that every morphism $\varphi : \mathbb{D}_\infty \wr_{[n]} F \to \mathbb{D}_\infty^{n-1} \rtimes \mathrm{Sym}(n-1)$ has finite image. This is what we verify in the rest of the proof.

\medskip \noindent
For convenience, for every $1 \leq i \leq n$, we denote by $D_i$ the factor $\mathbb{D}_\infty$ in $\mathbb{D}_\infty \wr_{[n]} F$ indexed by $i \in [n]$. Thus, $\mathbb{D}_\infty \wr_{[n]} F$ decomposes as $(D_1 \times \cdots \times D_n) \rtimes F$. 

\medskip \noindent
Notice that, because the $D_i$ are all conjugate in $\mathbb{D}_\infty \wr_{[n]} F$, since $F$ acts transitively on $[n]$, if we know that at least of $D_i$ is finite then we know that they are all finite, which implies that $\varphi(\mathbb{D}_\infty \wr_{[n]} F)= \varphi(D_1) \cdots \varphi(D_n) \varphi(F)$ must be finite as well.

\medskip \noindent
Thus, we can assume that $\varphi(D_i)$ is infinite for every $1 \leq i \leq n$. Since $\mathbb{D}_\infty^{n-1} \rtimes \mathrm{Sym}(n-1)$ cannot contain $n$ infinite cyclic subgroups that are pairwise not commensurable (as a group virtually isomorphic to $\mathbb{Z}^{n-1}$), there must exist two distinct $1 \leq i,j \leq n$ such that $\varphi(D_i)$ and $\varphi(D_j)$ are commensurable in $\mathbb{D}_\infty^{n-1} \rtimes \mathrm{Sym}(n-1)$. In other words, if $a_i,b_i \in D_i$ (resp.\ $a_j,b_j \in D_i$) denote two generators of order two, there exist $p,q \in \mathbb{Z} \backslash \{0\}$ such that $\varphi(a_ib_i)^p = \varphi(a_jb_j)^q$. Notice that
$$\begin{array}{lcl} \displaystyle \varphi \left( \left[ a_i, (a_ib_i)^p \right] \right) & = & \displaystyle  \left[ \varphi(a_i), \varphi(a_ib_i)^p \right] = \left[ \varphi(a_i), \varphi(a_jb_j)^q \right] \\ \\ & = & \displaystyle \varphi \left( \left[ a_i, (a_jb_j)^q \right] \right) = \varphi(1) = 1. \end{array}$$
Thus, $\varphi$ is not injective on $D_i$, which implies that $\varphi(D_i)$ must be finite, a contradiction. 
\end{proof}

\begin{cor}\label{cor:AffineCoxeter}
For every $n \geq 2$, the affine Coxeter group $\widetilde{A_n}$ satisfies $(\mathrm{FW}_{n})$. 
\end{cor}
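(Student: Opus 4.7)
The plan is to apply Theorem~\ref{thm:VirtuallyAbelian}. The affine Coxeter group $\widetilde{A_n}$ is virtually abelian: it splits as a semidirect product $\Lambda \rtimes W$, where $\Lambda$ is the translation lattice (free abelian of finite rank) and $W$ is the finite Weyl group, a symmetric group acting on $\Lambda$ via the standard reflection representation. According to Theorem~\ref{thm:VirtuallyAbelian}, it suffices to show that no subgroup $H \leq \widetilde{A_n}$ of index at most $n$ admits a morphism $H \to \mathbb{D}_\infty$ with infinite image.

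The first step is a translation into an algebraic condition. Since $\mathbb{D}_\infty$ contains $\mathbb{Z}$ as an index-$2$ subgroup, a morphism $H \to \mathbb{D}_\infty$ with infinite image is equivalent to the data of a subgroup $K \leq H$ of index $\leq 2$ together with a surjection $K \to \mathbb{Z}$ compatible with the conjugation action of $H/K$ on $K^{ab}$; the $H/K$-action on $K^{ab} \otimes \mathbb{Q}$ always produces a non-zero $+1$- or $-1$-eigenvector, and in either case one recovers a $\mathbb{D}_\infty$-morphism with infinite image. Any such $K$ inherits the semidirect decomposition $K \cong (K \cap \Lambda) \rtimes W_K$ with $W_K := K/(K \cap \Lambda) \leq W$, hence $K^{ab} \cong (K \cap \Lambda)_{W_K} \oplus W_K^{ab}$. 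The second summand is finite, so $K^{ab}$ is infinite precisely when the standard representation of $W$ admits a non-zero $W_K$-invariant vector. Via the permutation description $W = S_m$, this is equivalent to $W_K$ failing to act transitively on the natural $m$-element set underlying $W$.

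The heart of the proof is therefore the combinatorial verification that, for every chain $K \leq H \leq \widetilde{A_n}$ with $[\widetilde{A_n}:H] \leq n$ and $[H:K] \leq 2$, the projection $W_K \leq W$ acts transitively on $\{1, \ldots, m\}$, so that $K^{ab}$ is finite and no morphism to $\mathbb{D}_\infty$ with infinite image can exist. This reduces to a classification of subgroups of $S_m$ of small index: for sufficiently large $m$, the only subgroups of $S_m$ of index strictly less than $m$ are $S_m$ and $A_m$, both transitive, and $A_m$ itself has no subgroup of index $2$, which rules out any non-transitive configuration arising from an index-$2$ over- or undergroup. The few remaining small values of $m$ are handled by direct inspection of the subgroup lattices. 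The main obstacle is thus the execution of this case analysis: one must verify that the index bound $n$ provided by Theorem~\ref{thm:VirtuallyAbelian} matches precisely the threshold at which transitivity is forced throughout the subgroup chain, including for the sporadic small-index subgroups of $S_m$ (such as dihedral and Klein four-subgroups) that appear in the low-dimensional cases.
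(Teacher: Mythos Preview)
Your reduction via coinvariants is a clean repackaging of the paper's own argument: both project to $W=S_{n+1}$, use that subgroups of index $<n+1$ contain $A_{n+1}$ for $n+1\geq 5$, and then kill the lattice. Your transitivity criterion for $W_K$ makes this last step transparent, and for $n=2$ and $n\geq 4$ the verification goes through exactly as you outline.

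The gap is at $n=3$, which you defer to ``direct inspection''. That inspection fails. Take $H=\Lambda_0\rtimes D_4$, where $D_4$ is a $2$-Sylow of $S_4$; this has index $3$ in $\widetilde{A_3}$. Among the three index-$2$ subgroups of $D_4$ sits the \emph{intransitive} Klein four-group $V=\{1,(13),(24),(13)(24)\}$, with orbits $\{1,3\}$ and $\{2,4\}$. By your own criterion, $K=\Lambda_0\rtimes V$ then surjects onto $\mathbb{Z}$ (explicitly via $\lambda\mapsto\lambda_1+\lambda_3$), and the $-1$-eigenvector construction you sketch produces a homomorphism $H\to\mathbb{D}_\infty$ with infinite image. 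In the paper's generators one may take $a\mapsto(2,+1)$, $b,c\mapsto(1,+1)$, $x,y\mapsto(0,-1)$, $z\mapsto(0,+1)$ in $\mathbb{Z}\rtimes\{\pm1\}$, and every relation of $\Lambda_0\rtimes D_4$ checks. Hence $\widetilde{A_3}$ does \emph{not} satisfy $(\mathrm{FW}_3)$, only $(\mathrm{FW}_2)$. This is not a defect of your method but of the statement itself: the paper's separate treatment of $n=3$ contains an error in the ``$z=1$'' subcase (the asserted equality $b^{-1}=xax^{-1}$ is unjustified, and the resulting quotient is $\mathbb{D}_\infty$, not finite). Your transitivity framework actually exposes the obstruction that the paper's presentation-chasing obscured.
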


\noindent
During the proof of the corollary, we will use the following description of $\widetilde{A_n}$. First, set $G_n:= \mathbb{Z}^{n+1} \rtimes S_{n+1}$, where the symmetric group $S_{n+1}$ permutes the generators of a free basis of $\mathbb{Z}^{n+1}$. The subgroup $\Lambda_0 \leq \mathbb{Z}^{n+1}$ corresponding to the $(n+1)$-tuples whose sum equals $0$ yields a subgroup of rank $n$ normalised by $S_{n+1}$ in $G_n$. Let $C_n$ denote the subgroup $\Lambda_0 \rtimes S_{n+1}$ of $G_n$. Then $C_n$ is isomorphic to the affine Coxeter group $\widetilde{A_n}$. 

\begin{proof}[Proof of Corollary~\ref{cor:AffineCoxeter}.]
According to Theorem~\ref{thm:VirtuallyAbelian}, it suffices to show that no subgroup $H \leq C_n$ of index $\leq n$ has a morphism to $\mathbb{D}_\infty$ with infinite image. So fix a subgroup $H \leq C_n$ of index $\leq n$ and a morphism $\varphi : H \to \mathbb{D}_\infty$. We want to prove that the image of $\varphi$ is necessarily finite. For $n \neq 3$, the assertion follows from the next observation:

\begin{claim}\label{lem:Symmetric}
Assume that $n\neq 3$ and let $L$ be a group with no subgroup isomorphic to the alternating group $A_{n+1}$. Every morphism $\varphi : H \to L$ has finite image.
\end{claim}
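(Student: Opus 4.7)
The plan is to exploit the simplicity of $A_{n+1}$, which holds precisely because $n+1 \neq 4$, at two crucial places: first to force $A_{n+1} \leq H$, then to force $\varphi(A_{n+1}) = 1$. Embedding $A_{n+1} \leq S_{n+1} \leq C_n$, the canonical injection $A_{n+1}/(A_{n+1} \cap H) \hookrightarrow C_n/H$ yields $[A_{n+1} : A_{n+1} \cap H] \leq n$. For $n \geq 4$, the smallest index of a proper subgroup of the simple group $A_{n+1}$ is $n+1 > n$, forcing $A_{n+1} \cap H = A_{n+1}$; for $n = 2$, the only proper subgroup of $A_3 = \mathbb{Z}/3\mathbb{Z}$ has index $3 > 2$, so the same conclusion holds. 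Restricting $\varphi$ to $A_{n+1}$ gives a morphism whose kernel is normal in the simple group $A_{n+1}$, hence trivial or everything; the hypothesis on $L$ rules out an injection, so $A_{n+1} \leq \ker(\varphi)$.

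Next, I would set $N = \Lambda_0 \cap H$; the same index argument gives $[\Lambda_0 : N] \leq n$, and $N$ is $A_{n+1}$-invariant since $A_{n+1} \leq H$ normalises $\Lambda_0$. For any $v \in N$ and $a \in A_{n+1}$, a direct computation in $C_n = \Lambda_0 \rtimes S_{n+1}$ using $\varphi(0,a) = 1$ gives
$$\varphi(v,1) = \varphi\bigl((0,a)(v,1)(0,a)^{-1}\bigr) = \varphi(av,1),$$
so the element $(a-1)v \in \Lambda_0$ lies in $\ker(\varphi)$. I would then let $M \leq \Lambda_0$ be the subgroup generated by all such $(a-1)v$ with $v \in N$ and $a \in A_{n+1}$, noting that $M \leq \ker(\varphi)$.

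The remaining task is to show that $M$ has finite index in $\Lambda_0$, after which the conclusion is immediate: $M \rtimes A_{n+1} \leq \ker(\varphi)$ has index at most $[\Lambda_0 : M] \cdot [S_{n+1} : A_{n+1}] < \infty$ in $C_n$, hence a fortiori in $H$, so $\varphi(H)$ is finite. To prove $M$ has finite index, I would pass to rational coefficients: since $N$ has finite index in $\Lambda_0$, a clearing-denominators argument shows $M \otimes \mathbb{Q}$ equals the $\mathbb{Q}$-subspace $V$ of $\Lambda_0 \otimes \mathbb{Q}$ spanned by all $(a-1)v$ for $v \in \Lambda_0$ and $a \in A_{n+1}$. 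By Maschke's theorem, $V$ is a direct sum complement of the fixed subspace $(\Lambda_0 \otimes \mathbb{Q})^{A_{n+1}}$, so it suffices to verify that this fixed subspace is zero. Since $A_{n+1}$ acts transitively on the $n+1$ coordinates (for any $n \geq 2$), its fixed vectors in $\mathbb{Q}^{n+1}$ are the constant vectors, whose intersection with $\Lambda_0 \otimes \mathbb{Q} = \{\sum v_i = 0\}$ is trivial. Hence $V = \Lambda_0 \otimes \mathbb{Q}$, giving $[\Lambda_0 : M] < \infty$. The main obstacle is conceptual rather than computational: assembling the three ingredients (two successive index chases, the conjugation identity, and Maschke plus transitivity) in the right order, and treating the degenerate case $n = 2$ in parallel with the generic case.
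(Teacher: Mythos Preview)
Your proof is correct and follows the same overall architecture as the paper's: show $A_{n+1}\le H$ via an index chase exploiting simplicity, kill $A_{n+1}$ in the image using the hypothesis on $L$, and then prove $\varphi$ is trivial on a finite-index subgroup of $\Lambda_0$. The genuine difference is in this last step. The paper passes to the explicit finite-index lattice $\Lambda_0\cap(k\mathbb{Z})^{n+1}\subset H$ and uses three concrete elements $a=(k,-k,0,\dots)$, $b=(0,k,-k,0,\dots)$, $c=a+b$ lying in a single $A_{n+1}$-orbit; the relation $\varphi(c)=\varphi(a)\varphi(b)=\varphi(c)^2$ immediately gives $\varphi(c)=1$, and the orbit of $c$ generates the lattice. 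You instead work with $N=\Lambda_0\cap H$ directly and invoke Maschke's theorem plus transitivity of $A_{n+1}$ to show the augmentation submodule $\langle (a-1)v\rangle$ is all of $\Lambda_0\otimes\mathbb{Q}$. The paper's route is more elementary and self-contained (no representation theory beyond the word ``orbit''); yours is cleaner conceptually, avoids exhibiting explicit vectors, and would generalise verbatim to any rational representation of $A_{n+1}$ with no trivial summand.
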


\noindent
First, notice that $A_{n+1}$ has no proper subgroup of index $\leq n$. Otherwise, we would get a non-trivial morphism $A_{n+1} \to S_m$ for some $m\leq n$. Notice that 
$$(n+1)! = (n+1) \cdot n! > 2 \cdot m!, \text{ hence } |A_n| > |S_m|.$$
Consequently, our morphism $A_{n+1} \to S_m$ cannot be injective, which implies, since $A_{n+1}$ is simple, that the morphism must be trivial. We deduce from this observation that $H$ must contain $A_{n+1} \leq S_{n+1}$. Therefore, $H$ must contain $(\Lambda_0 \cap (k\mathbb{Z})^{n+1}) \rtimes A_{n+1}$ as a finite-index subgroup for some large $k \geq 1$. In order to conclude the proof of our claim, it suffices to show that $\varphi$ is trivial on this subgroup. We already know that $\varphi$ cannot be injective on $A_{n+1}$, by assumption, so $\varphi$ must be trivial on $A_{n+1}$ by simplicity. Consider the elements
$$a:= (k,-k,0,0, \ldots, 0), \ b:=(0,k,-k,0, \ldots, 0), \text{ and } c:=(k,0,-k,0, \ldots, 0)$$
of $\Lambda_0 \cap (k\mathbb{Z})^{n+1}$. Because they lie in the same $A_{n+1}$-orbit, they must have the same image under $\varphi$. But $c=ab$, so 
$$\varphi(c)= \varphi(ab) = \varphi(a)\varphi(b)= \varphi(c)^2, \text{ hence } \varphi(c)=1.$$
But the $A_{n+1}$-orbit of $c$ generate $\Lambda_0 \cap (k\mathbb{Z})^{n+1}$, so the desired conclusion follows, proving Claim~\ref{lem:Symmetric}.

\medskip \noindent
For $n = 3$, the alternating group is no longer simple and we treat the case separately. Because $H \cap S_4$ has index at most $3$ in $S_4$, we know that $H \cap S_4$ is either $S_4$, or $A_4$, or a $2$-Sylow subgroup. In the latter case, because $2$-Sylow subgroups are always pairwise conjugate, we can assume without loss of generality that $H \cap S_4$ coincides with our favourite subgroup of order $8$, say the copy $\langle (1234), (12) \rangle$ of the dihedral group $D_4$. Thus, $H$ contains a finite-index subgroup of the form $(\Lambda_0 \cap (k\mathbb{Z})^4 ) \rtimes S_4$, $(\Lambda_0 \cap (k\mathbb{Z})^4 ) \rtimes A_4$, or $(\Lambda_0 \cap (k\mathbb{Z})^4 ) \rtimes D_4$. In order to conclude our proof, it suffices to show that these three groups, which are respectively isomorphic to $\Lambda_0 \rtimes S_4$, $\Lambda_0 \rtimes A_4$, and $\Lambda_0 \rtimes D_4$, do not admit a morphism to $\mathbb{D}_\infty$ having an infinite image. The first case follows from the second case. In order the prove the second case, notice that a morphism $\varphi : \Lambda_0 \rtimes A_4 \to \mathbb{D}_\infty$ must send $A_4$ to a cyclic subgroup of order $\leq 2$. But $A_4$ does not contain a subgroup of index two, so actually $\varphi$ must be trivial on $A_4$. Then, we can conclude by repeating word for word the end of the proof of Claim~\ref{lem:Symmetric}. It remains to consider $\Lambda_0 \rtimes D_4$. 

\medskip \noindent
An element of $\Lambda_0$ can be thought of as a labelling of the vertices of a $4$-cycle with integers that sum to zero. Then, the action of $D_4$ on $\Lambda_0$ coincides with the permutation of the integers according to the usual action of $D_4$ on the $4$-cycle. We fix generators $a,b,c$ of $\Lambda_0$ and $x,y,z$ of $D_4$ as follows:

\begin{center}
\includegraphics[width=0.7\linewidth]{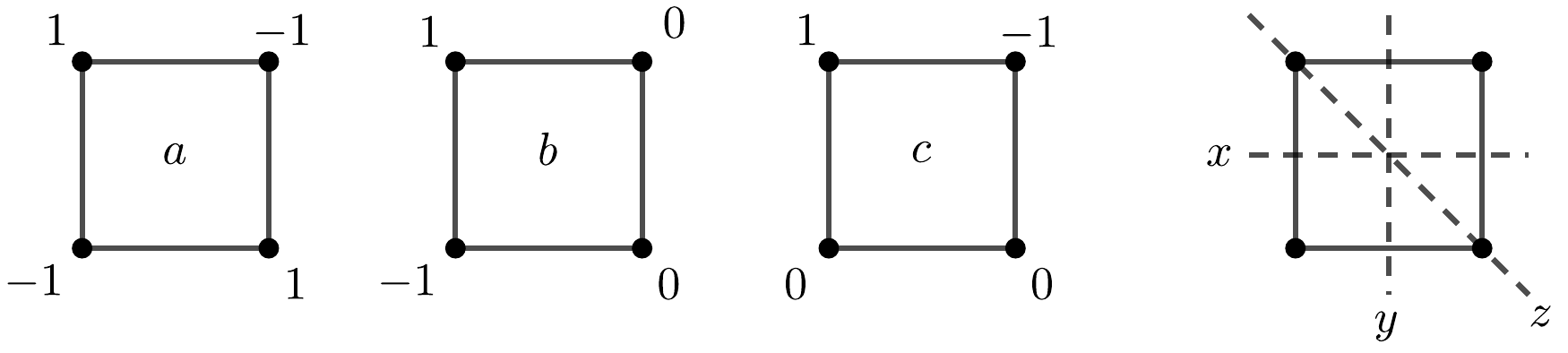}
\end{center}

\noindent
Then $\Lambda_0$ admits $\langle a,b,c \mid [a,b]=[b,c]=[a,c]=1 \rangle$ as a presentation, and $D_4$ admits 
$$\langle x,y,z \mid x^2=y^2=z^2=[x,y]=1, zxz^{-1}=y, zyz^{-1}=x \rangle$$
as a presentation. Consequently, $\Lambda_0 \rtimes D_4$ can be described as generated by $a,b,c,x,y,z$ submitted to the previous relations as well as
$$\left\{ \begin{array}{l} xax^{-1}=a^{-1} \\ yay^{-1}=a^{-1} \\ zaz^{-1}=a \end{array} \right., \ \left\{ \begin{array}{l} xbx^{-1}=b^{-1} \\ yby^{-1}=a^{-1}b \\ zbz^{-1}=c \end{array} \right., \text{ and } \left\{ \begin{array}{l} xcx^{-1}=a^{-1}c \\ ycy^{-1}=c^{-1} \\ zcz^{-1}=b \end{array} \right..$$
Notice that a morphism $\varphi : \Lambda_0 \rtimes D_4 \to \mathbb{D}_\infty$ must send $D_4$ to a cyclic subgroup of order $\leq 2$, so the kernel of $\varphi$ must contain a subgroup of index two in $D_4$. But $D_4$ has only three subgroups of index two, namely, when thinking of $D_4$ as a subgroup of $S_4$ (the latter permuting the vertices of our $4$-cycle, labelled cyclically from $1$ (the top left vertex) to $4$ (the bottom left vertex)):
$$\left\{ \mathrm{id}, \ x=(12)(34), \ y=(14)(23), \ (13)(24) \right\},$$
$$\left\{ \mathrm{id}, \ (13), \ z= (24),\ (13)(24) \right\},$$
and
$$\left\{ \mathrm{id}, \ xzy= (1234), \ (13)(24), \ (1432) \right\}.$$ 
Therefore, in order to conclude that the image of $\varphi$ is finite, it suffices to show that putting $x=y=1$, or $z=1$, or $x=yz$ in the presentation of $\Lambda_0 \rtimes D_4$ given above yields a finite group. 

\medskip \noindent
First, let us add the relations $x=y=1$. Then we deduce from $a=xax^{-1}=a^{-1}$ that $a$ becomes an element of order two. Same thing for $b$ and $c$. Thus, the group we get has a normal abelian $2$-subgroup $\langle a,b,c \rangle$ with a corresponding quotient that is finite (as a quotient of $D_4$), hence the desired conclusion.

\medskip \noindent
Next, let us add the relation $z=1$. We have $b= zbz^{-1}=c$. Therefore, 
$$c^{-1}= b^{-1}= xbx^{-1}=xcx^{-1}=a^{-1}c, \text{ hence } a=c^2,$$
which implies that
$$b^{-1}= xax^{-1}= xb^2x^{-1}=(xbx^{-1})^2 = b^{-2}, \text{ hence } b=1.$$
Consequently, $a=b=c=1$. We conclude that the group we obtain from our new presentation is a quotient of $D_4$, and therefore finite, as desired.

\medskip \noindent
Finally, let us add the relation $x=yz$. As a consequence, 
$$b^{-1}=xbx^{-1}= y (zbz^{-1}) y^{-1} = ycy^{-1}= c^{-1}, \text{ hence } b=c.$$
Also, from
$$1= [x,y]= [yz,y]= y [z,y] y^{-1}, \text{ hence } [y,z]=1;$$
we deduce that $x=zyz^{-1}=y$. This implies
$$b^{-1}= xbx^{-1}=yby^{-1}=a^{-1}b, \text{ hence } a=b^2.$$
So far, we have proved that $b=c$ and $a=c^2$. But we saw in the previous case that these relations imply that our group is finite, which concludes the proof. 
\end{proof}

\section{Cubulable groups with $(\mathrm{FW}_n)$}

\noindent
In this section, our goal is to exhibit a source of groups that are cubulable in some dimension but that also satisfy a fixed-point property in other dimensions. In Subsection~\ref{section:VGP}, we describe these groups and explain why they are cubulable. In Subsection~\ref{section:VGPfix}, we show that some of these groups satisfy good fixed-point properties.

\subsection{Virtual graph products}\label{section:VGP}

\noindent
Recall that, given a graph $\Gamma$ and a collection of groups $\mathcal{G}= \{G_u \mid u \in V(\Gamma)\}$ indexed by the vertices of $\Gamma$, the \emph{graph product} $\Gamma \mathcal{G}$ is defined as
$$\left( \underset{u \in V(\Gamma)}{\ast} G_u \right) / \langle \langle [g,h], \ g \in G_u, h \in G_v, \{u,v\} \in E(\Gamma) \rangle \rangle.$$
When all the groups in $\mathcal{G}$ are identical, say to some group $G$, we write $\Gamma G$ instead of $\Gamma \mathcal{G}$ for simplicity.

\medskip \noindent
Usually, one says that graph products interpolate between free products (when $\Gamma$ has no edge) and direct sums (when $\Gamma$ is a complete graph). Right-angled Artin and Coxeter groups are exemples of graph products. Most graph products split non-trivially as amalgamated products, and consequently act non-trivially on trees, which imply that they have rather bad median fixed-point properties. Nevertheless, it turns out that good fixed-point properties may be obtained when twisting graph products just a little bit.

\medskip \noindent
Given a graph $\Gamma$, a group $G$, and a subgroup $H \leq \mathrm{Isom}(\Gamma)$, define the \emph{virtual graph product}
$$\Gamma[G,H]:= \Gamma G \rtimes H$$
where $H$ acts on $\Gamma G$ by permuting the factors $G$ according to its action on $\Gamma$. If $H= \mathrm{Isom}(\Gamma)$, we denote our group simply by $\Gamma[G]$. 

\medskip \noindent
Fixed-point properties of virtual graph products are studied in the next subsection. In the opposite direction, we end this subsection by proving that virtualy graph products of cubulable groups are cubulable.

\begin{thm}\label{thm:VGPcc}
Let $\Gamma$ be a finite graph, $G$ a group, and $H \leq \mathrm{Isom}(\Gamma)$ a subgroup. If $G$ acts properly and cocompactly on a median graph of cubical dimension $d$, then $\Gamma[G,H]$ admits a proper and cocompact action on a median graph of cubical dimension $d\cdot \mathrm{clique}(\Gamma)$.
\end{thm}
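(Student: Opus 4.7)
The plan is to proceed in two steps. First, I would build a median graph $Y$ of cubical dimension at most $d \cdot \mathrm{clique}(\Gamma)$ on which the graph product $\Gamma G$ acts properly and cocompactly, arranging the construction so that it is canonical with respect to relabellings of $V(\Gamma)$. Then I would promote this to an action of the semidirect product $\Gamma[G,H] = \Gamma G \rtimes H$ by letting $H \leq \mathrm{Isom}(\Gamma)$ act on $Y$ through the induced permutations of the vertex-factors, and verify that properness and cocompactness are preserved.

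For the first step, I would use the standard graph-product-of-cube-complexes construction. Concretely: view each vertex-group $G_u \cong G$ as acting on its own copy $X_u$ of $X$, pick a consistent basepoint, and build $Y$ as the median graph cubulating the wall space on $\Gamma G$ whose walls are the $\Gamma G$-translates of the pairs $(u, J)$, with $u \in V(\Gamma)$ and $J$ a hyperplane of $X_u$. Equivalently, $Y$ can be described as the universal cover of a finite non-positively curved cube complex obtained by glueing, for each clique $K \subseteq \Gamma$, a copy of the product $(X/G)^K$ along the combinatorial pattern prescribed by $\Gamma$. A cube of $Y$ is then described by a clique $K$ of $\Gamma$ together with a cube of $X_u$ at each $u \in K$; since $|K| \leq \mathrm{clique}(\Gamma)$ and each cube of $X_u$ has dimension at most $d$, we obtain $\dim Y \leq d \cdot \mathrm{clique}(\Gamma)$. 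Properness is inherited from the properness of the $G$-action on $X$ (stabilisers of vertices of $Y$ are products of conjugates of point-stabilisers for $G \curvearrowright X$ over a clique), and cocompactness follows because $\Gamma$ is finite and $X/G$ is compact.

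For the second step, the key observation is that, since all vertex-groups are literally the same group $G$ acting on the same median graph $X$, the construction above is canonical: any $\pi \in \mathrm{Isom}(\Gamma)$ induces an automorphism $\tilde{\pi}$ of $Y$ satisfying $\tilde{\pi} \, g \, \tilde{\pi}^{-1} = \pi \cdot g$ for every $g \in \Gamma G$, where $\pi \cdot g$ denotes the image of $g$ under the automorphism of $\Gamma G$ obtained by relabelling vertex-subgroups. Restricting to $H$ yields a homomorphism $\Gamma[G,H] = \Gamma G \rtimes H \to \mathrm{Aut}(Y)$. Since $\mathrm{Isom}(\Gamma)$ is finite, passing from the $\Gamma G$-action to the $\Gamma[G,H]$-action preserves both properness and cocompactness, which closes the proof.

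The main obstacle is, in the first step, ensuring that the graph-product cubulation genuinely produces a median graph with the claimed dimension bound and that no auxiliary choice (basepoint, hyperplane orientation, ordering of factors) breaks the $\mathrm{Isom}(\Gamma)$-equivariance needed in the second step. A pocset/wall-space implementation makes this almost automatic, because the wall set is canonically indexed by $V(\Gamma)$; a Salvetti-type glued description, on the other hand, requires some care in the cell-identifications to remain symmetric under $\mathrm{Isom}(\Gamma)$. Once the construction is set up equivariantly, the remaining arguments are essentially bookkeeping.
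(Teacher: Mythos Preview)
Your proposal is correct and follows essentially the same strategy as the paper: both cubulate the wall space on $\Gamma G$ whose walls are the $\Gamma G$-translates of the hyperplanes of $X$ indexed by vertices of $\Gamma$, and then extend the action to $\Gamma[G,H]$ via the canonical $\mathrm{Isom}(\Gamma)$-equivariance of the construction. The paper packages this through the quasi-median Cayley graph of $\Gamma G$ and the system-of-wallspaces formalism of \cite{QM}, and it first adds spikes to $X$ to obtain a basepoint with trivial $G$-stabiliser so that the pulled-back walls genuinely separate points of $\Gamma G$ --- a small technicality you will want in your wall-space implementation as well (and which also disposes of the free-action issue lurking in your alternative $(X/G)^K$-gluing description).
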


\noindent
Before considering Theorem~\ref{thm:VGPcc} in full generality, let us assume first that $G$ is a finite group. In this case, it is known that the graph product $\Gamma G$ acts properly and cocompactly on a median graph. Namely, let $M$ be the graph
\begin{itemize}
	\item whose vertices are the cosets $g\langle \Lambda \rangle$ where $g \in \Gamma G$ and $\Lambda \subset \Gamma$ complete;
	\item and whose edges connect two cosets $g \langle \Lambda \rangle$ and $g \langle \Lambda \cup \{ \mathrm{pt}\} \rangle$.
\end{itemize}
Then $M$ is a median graph and $\Gamma G$ acts properly and cocompactly on $M$ by left-multiplication. See for instance \cite{MR2240922} and references therein for more details. Because $\mathrm{Isom}(\Gamma)$, when thought of as a subgroup of $\mathrm{Aut}(\Gamma G)$, permutes the subgroups in $\{ \langle \Lambda \rangle, \ \Lambda \subset \Gamma \text{ complete}\}$, the action of $\Gamma G$ on $M$ naturally extends to $\Gamma[G]$, which is again proper and cocompact. 

\medskip \noindent
When $G$ is an infinite group, the construction of $M$ still makes sense and still produces a median graph, but the action of $\Gamma G$ is no longer proper. So a new construction a necessary. This is what we explain below. Nevertheless, the concrete examples mentioned in the next section essentially concern only virtual graph products of finite groups, so the general proof below can be skipped during a first reading.

\medskip \noindent
The general case of Theorem~\ref{thm:VGPcc} can be proved by following the lines of \cite[Theorem~8.17]{QM}, which proves that graph products of cubulable groups are cubulable. The argument exploits the \emph{quasi-median} geometry of graph products. In the same way that median graphs can be defined as retracts of infinite cubes, quasi-median graphs can be defined as retracts of infinite prisms (i.e.\ products of (infinitely many infinite) complete graphs). As shown in \cite{QM}, most of the machinery of hyperplanes in median graphs naturally generalises to quasi-median graphs. We record a few definitions and properties needed in the proof of Theorem~\ref{thm:VGPcc} given below.
\begin{itemize}
	\item In a quasi-median graph, a \emph{hyperplane} is an equivalence class of edges with respect to the reflexive-transitive closure of the relation that identifies two edges whenever they belong to a common $3$-cycle or whenever they are opposite edges in a $4$-cycle. It follows from the definition that every complete subgraph is entirely contained in a single hyperplane. In fact, one can think of a hyperplane as a collection of parallel \emph{cliques} (i.e.\ maximal complete subgraphs). 
	\item Given a quasi-median graph $X$ and a hyperplane $J$, the graph $X \backslash \backslash J$ obtained by removing the edges from $J$ is disconnected. More precisely, given a clique $C$ whose edges belong to $J$, the vertices of $C$ lie in pairwise distinct components of $X \backslash \backslash J$, called \emph{sectors}; and, conversely, every sector contains a vertex of $C$. 
	\item Cliques in quasi-median graphs are \emph{gated}. (Recall that, in a graph $X$, a subgraph $Y \subset X$ is \emph{gated} if, for every $x \in X$, there exists some $y \in Y$ such that, for every $z \in Y$, some geodesic connecting $x$ to $z$ passes through $y$. The vertex $y$ is referred to as the \emph{projection of $x$ to $Y$}.)
	\item In a quasi-median graph, a \emph{prism} refers to a product of cliques. 
\end{itemize}
We refer the reader to \cite{QM} for more details. 

\begin{proof}[Proof of Theorem~\ref{thm:VGPcc}.]
By assumption, $G$ admits a proper and cocompact action on some median graph $X$. Up to adding \emph{spikes} to $X$, we can assume without loss of generality that there exists some vertex in $X$ whose $G$-stabiliser is trivial. (See for instance \cite[Lemma~4.34]{QM}.) Pull back through the orbit map $g \mapsto g x_0$ the wallspace structure of $X$ induced by its hyperplanes to a $G$-invariant wallspace structure $\mathcal{W}$ on $G$. The fact that $x_0$ has a trivial stabiliser implies that any two points of $G$ are separated by at least one wall of $\mathcal{W}$. See \cite[Section~4.3]{QM} for details. 

\medskip \noindent
According to \cite[Proposition~8.2]{QM}, the Cayley graph
$$\mathrm{QM}:= \mathrm{Cayl} \left( \Gamma G, \bigcup\limits_{u \in V(\Gamma)} G_u \right),$$
where $G_u$ denotes the copy of $G$ indexed by $u$, turns out to be quasi-median. The graph product $\Gamma G$ acts on $\mathrm{QM}$ by left-multiplication. Because $\mathrm{Isom}(\Gamma)$ permutes the generators, this action extends to $\Gamma[G]$. A vertex in $\mathrm{QM}$ has trivial stabiliser in $\Gamma G$, but its stabiliser in $\Gamma[G]$ is conjugate to $\mathrm{Isom}(\Gamma)$. In particular, $\Gamma[G]$ acts on $\mathrm{QM}$ with finite stabilisers. Let us record a few basic properties about $\mathrm{QM}$.
\begin{itemize}
	\item The cliques of $\mathrm{QM}$ are the cosets $gG_u$ where $g \in \Gamma G$, $u \in V(\Gamma)$. (See \cite[Lemma~8.6]{QM}.)
	\item The prisms of $\mathrm{QM}$ are the cosets $g\langle \Lambda \rangle$ where $g \in \Gamma G$, $\Lambda \subset \Gamma$ complete. (See \cite[Corollary~8.7]{QM}.)
	\item Two cliques $C_1$ and $C_2$ belong to the same hyperplane if and only if there exist $g \in \Gamma G$, $u \in V(\Gamma)$, and $h \in \langle \mathrm{link}(u) \rangle$ such that $C_1=gG_u$ and $C_2=gh G_u$. (See \cite[Lemma~8.9 and Corollary~8.10]{QM}.)
\end{itemize}
Now, we want to endow $\mathrm{QM}$ with a \emph{system of wallspaces}, i.e.\ we want to endow every clique of $\mathrm{QM}$ with a wallspace structure. For every clique $C$, there exists a unique vertex $u \in V(\Gamma)$ and a unique shortest element $g \in \Gamma G$ (with respect to the generating set $\bigcup_{v \in V(\Gamma)} G_v$) such that $C=gG_u$. This allows us to identify canonically $C$ with $G$ and to transfer the walls $\mathcal{W}$ on $G$ to walls $\mathcal{W}(C)$ on $C$. 

\medskip \noindent
Our system of wallspaces is \emph{coherent}. Namely, let $C_1$ and $C_2$ be two cliques that belong to the same hyperplane. We claim that the projection map $C_1 \to C_2$ is a bijection that sends walls to walls. We already know that there exist $g \in \Gamma G$, $u \in V(\Gamma)$, and $h \in \langle \mathrm{link}(u) \rangle$ such that $C_1=gG_u$ and $C_2=gh G_u$. We can choose $g$ of minimal length (which amounts to saying that, when $g$ is written as a graphically reduced word, no syllable belonging to $G_u$ can be shifted to the end of the word). If so, $gh$ also has minimal length. Then the projection $C_1 \to C_2$ can be simply written as $gs \mapsto ghs$ for every $s \in G_u$. Walls are sent to walls essentially by construction.

\medskip \noindent
Next, notice that our system of wallspaces is $\Gamma[G]$-invariant. Indeed, let $C$ be a clique and let $W \in \mathcal{W}(C)$ be a wall. Given an element $(g,\varphi) \in \Gamma[G]$, we claim that $(g, \varphi) \cdot W$ is a wall in $\mathcal{W}((g,\varphi) C)$. Fix a vertex $u \in V(\Gamma)$ and an element $h \in \Gamma G$ of minimal length such that $C=hG_u$. We have $(g,\varphi) \cdot C = gh^\varphi G_{\varphi(u)}$. The element $gh^\varphi$ may no longer have minimal length, but we can decompose it as a product $ks$ where $s \in G_{\varphi(u)}$ and where $k$ now has minimal length. By construction, our wall $W$ can be written as $h \{ W^-,W^+\}$ for some $\{W^-,W^+\}$ in $\mathcal{W}$ (when $G_u$ is identified with $G$). Then $(g,\varphi) \cdot W = k \{sW^-,sW^+\}$. But $\mathcal{W}$ is $G$-invariant, so $\{sW^-,sW^+\}$ belongs to $\mathcal{W}$, which implies that $(g,\varphi) \cdot W$ belongs to $\mathcal{W}(C)$, as desired.

\medskip \noindent
As explained in \cite[Section~4.1]{QM}, the system of wallspaces $\{ (C, \mathcal{W}(C)) \mid C \text{ clique}\}$ can be extended to a wallspace structure $\mathcal{HW}$ on $\mathrm{QM}$. The idea is the following. For every clique $C$, set
$$\overline{\mathcal{W}}(C):= \left\{ \left\{ \mathrm{proj}_C^{-1}(W^-), \mathrm{proj}_C^{-1}(W^+) \right\}, \ \{W^-,W^+\} \in \mathcal{W}(C) \right\}.$$
Because our system is coherent, the collection of walls $\overline{\mathcal{W}}(C)$ depends only on the hyperplane $J$ containing $C$, so we can write it $\mathcal{W}(J)$. Our walls on $\mathrm{QM}$ are now $\mathcal{HW}:= \{ \mathcal{W}(J), \text{ $J$ hyperplane}\}$. Our group $\Gamma[G]$ acts on the wallspace $(\mathrm{QM}, \mathcal{HW})$ by preserving walls. 

\medskip \noindent
Let $M$ denote the median graph obtained by cubulating $(\mathrm{QM},\mathcal{HW})$. It follows from \cite[Propositions~4.22 and~4.23]{QM} that the action of $\Gamma[G]$ on $M$, induced by the action of $\Gamma[G]$ on $\mathrm{QM}$, is proper and cocompact. Moreover, notice that \cite[Corollary~4.14, Lemma~3.1, and Corollary~8.7]{QM} imply that the cubical dimension of $M$ is at most $d \cdot \mathrm{clique}(\Gamma)$. 

\medskip \noindent
Thus, we have constructed the desired cubulation for $\Gamma[G]$. The same conclusion holds for $\Gamma[G,H]$, given a subgroup $H \leq \mathrm{Isom}(\Gamma)$, since $\Gamma[G,H]$ has finite index in $\Gamma[G]$. 
\end{proof}

\subsection{Fixed-point property}\label{section:VGPfix}

\noindent
In this subsection, our goal is to exhibit examples of virtual graph products that satisfy good median fixed-point properties. Our main results are Theorems~\ref{thm:GraphProdFW} and~\ref{thm:GraphProdFWstraight}. Before statement our first theorem, we need to introduce some terminology.

\begin{definition}
A group $G$ satisfies the property $(\mathrm{FW}_n^+)$ if, for every action of $G$ on a median graph without hyperplane-inversions, $\mathrm{Fix}(G)$ is non-empty and convex.
\end{definition}

\noindent
This property will be useful for us because the Helly property for convex subgraphs in median graphs will allow us to prove that some intersections of fixators are non-empty, producing global fixed points. 

\begin{lemma}\label{lem:FixConvex}
A group $G$ satisfies $(\mathrm{FW}_n^+)$ if and only if it satisfies $(\mathrm{FW}_n)$ and it does not contain proper subgroups of index $\leq n$.
\end{lemma}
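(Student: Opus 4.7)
The forward implication $(\mathrm{FW}_n^+) \Rightarrow (\mathrm{FW}_n) \wedge \text{(no proper subgroup of index $\leq n$)}$ splits in two. That $(\mathrm{FW}_n)$ holds is almost formal: given an action of $G$ on a median graph $X$ of cubical dimension $\leq n$, passing to the cubical subdivision $X'$ keeps the cubical dimension $\leq n$ and eliminates hyperplane-inversions, so $(\mathrm{FW}_n^+)$ supplies a vertex of $X'$ fixed by $G$, which witnesses bounded orbits in $X$. For the subgroup condition, assume toward a contradiction that $H \leq G$ is a proper subgroup of index $k \in \{2,\ldots,n\}$, and let $G$ act on the $k$-cube $\{0,1\}^{G/H}$ by permuting coordinates via left-multiplication on $G/H$. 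This is a cubical action without hyperplane-inversions, and transitivity of the $G$-action on $G/H$ forces $\mathrm{Fix}(G)=\{\vec{0},\vec{1}\}$. Since the interval $I(\vec{0},\vec{1})$ is the whole $k$-cube, it contains $2^k \geq 4$ vertices, so $\mathrm{Fix}(G)$ is not convex, contradicting $(\mathrm{FW}_n^+)$.

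For the converse, assume $G$ satisfies $(\mathrm{FW}_n)$ and has no proper subgroup of index $\leq n$, and let $G$ act on a median graph $X$ of cubical dimension $\leq n$ without hyperplane-inversions. The crux of the argument is a reusable claim: whenever $\mathcal{J}$ is a finite $G$-invariant set of hyperplanes of $X$ such that every $G$-orbit in $\mathcal{J}$ consists of pairwise crossing hyperplanes, $G$ fixes every halfspace of every hyperplane in $\mathcal{J}$. Indeed, pairwise crossing caps the size of each orbit at $\dim X \leq n$, so each hyperplane-stabiliser has index $\leq n$; the no-proper-subgroup hypothesis upgrades this to equality (trivial orbits), and no-inversion then promotes setwise fixation to fixation of each halfspace.

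Non-emptyness is then handled via $(\mathrm{FW}_n)$: $G$ stabilises some cube $C$ of dimension $d \leq n$, and the $d$ hyperplanes dual to $C$ automatically pairwise cross, so the claim applies and gives $G$-fixation of every halfspace cutting $C$, hence of every vertex of $C$. For convexity, pick $x,y \in \mathrm{Fix}(G)$ and apply the claim to the finite set $\mathcal{W}(x|y)$. The point that needs checking is that $G$-orbits on $\mathcal{W}(x|y)$ consist of pairwise crossing hyperplanes. Define $J \prec J'$ on $\mathcal{W}(x|y)$ when the $x$-side halfspace of $J$ is strictly contained in that of $J'$; two elements of $\mathcal{W}(x|y)$ either cross or are $\prec$-comparable (nesting being the only alternative to crossing). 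Since $G$ fixes $x$, it preserves the $x$-side halfspaces and hence the order $\prec$. A nested pair $J, gJ$ in a single orbit would therefore iterate under $g$ to an infinite strictly $\prec$-monotone chain in the finite set $\mathcal{W}(x|y)$, which is impossible.

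Having established that $G$ fixes each halfspace of each hyperplane of $\mathcal{W}(x|y)$, the conclusion follows quickly: any vertex $z \in I(x,y)$ is uniquely determined by its orientation on the hyperplanes of $X$, and this orientation agrees with $x$'s outside $\mathcal{W}(x|y)$ and is fixed by $G$ inside $\mathcal{W}(x|y)$; hence $gz=z$ for every $g \in G$, giving $I(x,y) \subseteq \mathrm{Fix}(G)$ and the desired convexity. The main obstacle, I expect, is the pairwise-crossing-orbit argument: this is where the cubical dimension bound $\dim X \leq n$ genuinely enters, and it hinges on setting up the nesting partial order on $\mathcal{W}(x|y)$ and observing that $G$-invariance of basepoints automatically upgrades to $G$-invariance of this order.
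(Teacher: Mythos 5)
Your proof is correct, and its converse direction takes a genuinely different route from the paper's. The forward direction is essentially identical (both use the action on the cube $[0,1]^{G/H}$ and observe that the two constant vertices span the whole cube). For the converse, the paper proves convexity of $\mathrm{Fix}(G)$ indirectly: it shows $\mathrm{Fix}(G)$ is connected and locally convex, and then invokes a result of Chepoi (\cite{MR1114014}) that a connected locally convex subgraph of a median graph is convex. Connectivity is established by an induction on $d(u,v)$, building the cube spanned by the neighbours of $u$ on geodesics to $v$ and using the dimension bound to show its vertices are fixed. Your argument instead shows directly that, for $x,y \in \mathrm{Fix}(G)$, the entire interval $I(x,y)$ lies in $\mathrm{Fix}(G)$, by proving that each $G$-orbit inside $\mathcal{W}(x|y)$ consists of pairwise crossing hyperplanes (via the nesting order $\prec$ and the finiteness of $\mathcal{W}(x|y)$), so each orbit has size $\leq n$ and hence is a singleton, and then reading off $gz = z$ from the orientation of $z$ on each hyperplane. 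This avoids the citation entirely and packages both non-emptiness and convexity into one reusable claim about $G$-invariant finite families of hyperplanes with transverse orbits; the price is the slightly delicate halfspace bookkeeping at the end. Both proofs ultimately hinge on the same mechanism: in a median graph of cubical dimension $\leq n$, a family of pairwise transverse hyperplanes has size $\leq n$, so the no-proper-subgroup hypothesis trivialises the relevant $G$-orbits.
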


\begin{proof}
If $G$ satisfies $(\mathrm{FW}_n^+)$, then it clearly satisfies $(\mathrm{FW}_n)$. Let $H \leq G$ be a proper finite-index subgroup. Let $G$ act on the cube $Q:=[0,1]^{G/H}$ by permuting the coordinates. Notice that the vertices $\bar{0}:=(0,\ldots, 0)$ and $\bar{1}:=(1,\ldots, 1)$ are fixed by $G$, that the convex hull of $\{\bar{0},\bar{1}\}$ is $Q$ entirely, and that $G$ does not act trivially on $Q$ since $H$ is a proper subgroup. Therefore, it follows from the property $(\mathrm{FW}_n^+)$ that $[G:H] = \dim(Q) \geq n$. 

\medskip \noindent
Conversely, assume that $G$ satisfies $(\mathrm{FW}_n)$ and that it does not contain proper subgroups of index $\leq n$. We want to prove that $G$ satisfies $(\mathrm{FW}_n^+)$, so we fix an action $G \curvearrowright X$ on a median graph of cubical dimension $\leq n$ with no hyperplane-inversion. Since $G$ has $(\mathrm{FW}_n)$, we already know that $\mathrm{Fix}(G)$ is non-empty, and we claim that $\mathrm{Fix}(G)$ is furthermore convex. According to \cite{MR1114014}, it suffices to verify that $\mathrm{Fix}(G)$ is connected and \emph{locally convex} (i.e.\ every $4$-cycle containing two successive edges in $\mathrm{Fix}(G)$ is entirely contained in $\mathrm{Fix}(G)$). 

\medskip \noindent
We begin by proving that $\mathrm{Fix}(G)$ is connected. So let $u,v \in \mathrm{Fix}(G)$ be two vertices. We want to connect $u$ and $v$ by a path in $\mathrm{Fix}(G)$. If $u=v$, then there is nothing to prove, so from now on we assume that $u \neq v$. Let $a_1, \ldots, a_k \in X$ denote all the neighbours of $u$ that belong to geodesics connecting $u$ to $v$. Because the hyperplanes dual to the edges $[u,a_i]$, $1 \leq i \leq k$, must be pairwise transverse, the convex hull of $\{u,a_1, \ldots, a_k\}$ must be a $k$-cube, say $Q$. Notice that the vertex $u'$ opposite to $u$ in $Q$ belongs to a geodesic from $u$ to $v$, hence $d(v,u')<d(v,u)$. Proving that $Q \subset \mathrm{Fix}(g)$ will allow us to conclude by induction over $d(v,u)$ that there exists a path in $\mathrm{Fix}(G)$ connecting $u$ to $v$. Since the construction of $Q$ depends only on $u$ and $v$, which are fixed by $G$, necessarily $Q$ is stabilised by $G$. But $G$ also fixes $u \in Q$, so the action of $G$ on $Q$ is uniquely determined by the permutations of the $a_i$ it induces. The fixator of each $a_i$ yields a subgroup of index $\leq k \leq n$ in $G$. As a consequence of our assumptions, such a subgroup cannot be proper, so $G$ must fix each $a_i$, hence $Q \subset \mathrm{Fix}(G)$. We conclude that $\mathrm{Fix}(g)$ is connected, as desired.

\medskip \noindent
The local convexity of $\mathrm{Fix}(G)$ is clear. Indeed, if $a \in \mathrm{Fix}(G)$ is a vertex and $b,c \in \mathrm{Fix}(G)$ are two neighbours such that the edges $[a,b]$ and $[a,c]$ span a $4$-cycle $C$, then $C$ is the unique $4$-cycle spanned by $[a,b]$ and $[a,c]$, so the fourth vertex of $C$ is necessarily fixed by $G$. 
\end{proof}

\noindent
As a straight consequence of Lemma~\ref{lem:FixConvex}, we get the following examples:

\begin{cor}\label{cor:FWPlus}
For all $n \geq 1$ and $q \geq 2$, if $q$ has no divisor in $[2,n]$ then the cyclic group $\mathbb{Z}/q\mathbb{Z}$ satisfies $(\mathrm{FW}_n^+)$. 
\end{cor}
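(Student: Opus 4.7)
The plan is to invoke Lemma~\ref{lem:FixConvex} directly, which reduces $(\mathrm{FW}_n^+)$ to the conjunction of $(\mathrm{FW}_n)$ and the absence of proper subgroups of index at most $n$. Both conditions should be essentially automatic for $\mathbb{Z}/q\mathbb{Z}$ under the divisibility hypothesis.

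First, the property $(\mathrm{FW}_n)$ is immediate. Since $\mathbb{Z}/q\mathbb{Z}$ is finite, any action on any metric space has bounded orbits; in particular, any action on a median graph of cubical dimension $\leq n$ has bounded orbits, and hence stabilises a cube. So a fortiori, $\mathbb{Z}/q\mathbb{Z}$ satisfies $(\mathrm{FW})$ and thus $(\mathrm{FW}_n)$ for every $n \geq 1$.

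Second, I would invoke the standard classification of subgroups of a finite cyclic group: the subgroups of $\mathbb{Z}/q\mathbb{Z}$ are in bijection with the positive divisors of $q$, and the subgroup corresponding to a divisor $d$ has index exactly $d$. Hence $\mathbb{Z}/q\mathbb{Z}$ admits a proper subgroup of index $\leq n$ if and only if $q$ possesses a divisor in $[2,n]$. Under the hypothesis of the corollary, no such divisor exists, so $\mathbb{Z}/q\mathbb{Z}$ has no proper subgroup of index $\leq n$.

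Combining these two observations with Lemma~\ref{lem:FixConvex} yields $(\mathrm{FW}_n^+)$, completing the proof. There is no genuine obstacle: the statement is an immediate specialisation of the lemma, included mainly to provide a concrete family of examples (for instance $q$ prime with $q>n$, or more generally $q$ whose smallest prime divisor exceeds $n$) that will be fed into the graph-product constructions of the remainder of the section.
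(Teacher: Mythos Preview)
Your proposal is correct and matches the paper's approach exactly: the paper itself presents the corollary as ``a straight consequence of Lemma~\ref{lem:FixConvex}'' without further argument, and you have simply spelled out the two verifications (finite groups trivially satisfy $(\mathrm{FW}_n)$, and the subgroup-index condition reduces to the divisor hypothesis). Nothing to add.
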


\noindent
We are now ready to state and prove our first theorem.

\begin{thm}\label{thm:GraphProdFW}
Let $n \geq 1$ be an integers, $G$ a group, $\Gamma$ a finite graph, and $H \leq \mathrm{Isom}(\Gamma)$ a subgroup. Assume that the following assertions hold:
\begin{itemize}
	\item $G$ satisfies $(\mathrm{FW}_n^+)$ and $\Gamma$ has diameter two;
	\item the flag-completion of $\Gamma$ has some non-trivial topology in dimension $n$;
	\item for all vertices $a,b,x,y \in \Gamma$ satisfying $d(a,b)=d(x,y) =2$, there exists some $h \in H$ such that $h \cdot \{a,b\}= \{x,y\}$.
\end{itemize}
Then $\Gamma[G,H]$ satisfies the property $(\mathrm{FW}_n)$. 
\end{thm}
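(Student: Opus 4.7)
The plan is to exhibit a cube of $X$ stabilised by $\Gamma[G,H]$ by working with the fixed-point sets $F_u := \mathrm{Fix}(G_u)$ of the vertex groups. Let $\Gamma[G,H]$ act on a median graph $X$ of cubical dimension at most $n$, and pass to the cubical subdivision to ensure that no hyperplane-inversion occurs. Property $(\mathrm{FW}_n^+)$ applied to each $G_u$ then gives that $F_u$ is nonempty and convex, hence a median subgraph of cubical dimension at most $n$. For a clique $\sigma$ of $\Gamma$, the subgroups $(G_u)_{u \in \sigma}$ pairwise commute; an induction on $|\sigma|$, at each step applying $(\mathrm{FW}_n^+)$ to the action of a vertex group on the intersection already produced, yields that $F_\sigma := \bigcap_{u \in \sigma} F_u$ is nonempty and convex.

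The crux is to extend this to distance-two pairs: I claim that $F_u \cap F_v \neq \emptyset$ whenever $d_\Gamma(u,v) = 2$. Granting this, combined with the clique case and the diameter-two assumption, every pair $F_u, F_v$ intersects, so the Helly property for finite families of convex subgraphs in a median graph produces $F := \bigcap_{u \in V(\Gamma)} F_u \neq \emptyset$. This $F$ is convex and $H$-invariant since $h \cdot F_u = F_{h(u)}$ for $h \in H$. Because $H$ is finite, it has bounded orbits in $F$ and therefore stabilises a cube contained in $F$; since $\Gamma G$ fixes $F$ pointwise, this cube is stabilised by all of $\Gamma[G,H]$.

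I establish the distance-two claim by contradiction. By the third bullet, if some pair $\{u,v\}$ with $d_\Gamma(u,v) = 2$ satisfies $F_u \cap F_v = \emptyset$, the same holds for \emph{every} such pair. The diameter-two hypothesis then forces $\bigcap_{u \in S} F_u = \emptyset$ for every $S \subseteq V(\Gamma)$ that is not a clique of $\Gamma$, while for every clique the intersection is nonempty and convex, hence contractible. Consequently, the nerve of the cover $(F_u)_{u \in V(\Gamma)}$ of $A := \bigcup_{u \in V(\Gamma)} F_u$ coincides with the flag completion of $\Gamma$; since all nonempty finite intersections are contractible, the nerve theorem provides a homotopy equivalence between $A$ and this flag completion. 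By the second bullet, $\tilde H_n(A) \neq 0$.

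The contradiction comes from the dimension of $X$. The subspace $A$ is a subcomplex of the cube-completion of $X$ (the $F_u$ are convex subcomplexes in the absence of hyperplane-inversions), and this cube-completion is a CAT(0) cube complex, hence contractible, of CW-dimension at most $n$. The relevant portion of the long exact sequence of the pair $(X, A)$ reads $H_{n+1}(X,A) \to H_n(A) \to H_n(X)$, with the leftmost term vanishing since $X$ has no cells of dimension $> n$ and the rightmost vanishing by contractibility. Hence $H_n(A) = 0$, contradicting the computation from the nerve theorem. The most delicate step I foresee is the clean invocation of the nerve theorem for this cover by closed convex subcomplexes, which ultimately rests on the elementary observation that every nonempty finite intersection of convex subgraphs of a median graph is itself convex, hence contractible.
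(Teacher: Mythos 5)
Your proof takes essentially the same route as the paper and is correct: convexity of the sets $\mathrm{Fix}(G_u)$ via $(\mathrm{FW}_n^+)$, the Helly property to deduce non-empty intersections, the nerve theorem to identify the homotopy type of $\bigcup_u \mathrm{Fix}(G_u)$ with the flag completion of $\Gamma$, and the impossibility of killing $n$-dimensional topology by attaching cells of dimension $\leq n$ inside the contractible $X^\square$. Two small remarks. First, you replace the paper's deliberately general notion of ``non-trivial topology in dimension $n$'' with the specific statement $\widetilde{H}_n \neq 0$, which is what makes the long exact sequence of the pair $(X^\square, A)$ available; the paper's definition (a homotopy-invariant property $\mathcal{P}$, not satisfied by contractible spaces, stable under attaching cells of dimension $\le n$) is broader and is pushed through the homotopy equivalence and the cell attachments directly, so your argument as written proves a slightly narrower statement than the one in the theorem, though this is easily remedied by using the abstract $\mathcal{P}$ in place of $\widetilde{H}_n$. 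Second, you are more explicit than the paper on the final reduction: the paper passes somewhat quickly from the existence of a $\Gamma G$-fixed vertex to a global conclusion for $\Gamma[G,H]$, whereas your observation that the convex set $\bigcap_u \mathrm{Fix}(G_u)$ is $H$-invariant and that the finite group $H$ has bounded orbits, hence stabilizes a cube, makes this step airtight.
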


\noindent
A CW-complex $X$ has \emph{some non-trivial topology in dimension $n$} if there exists a property $\mathcal{P}$ such that:
\begin{itemize}
	\item $\mathcal{P}$ is invariant under homotopy;
	\item $\mathcal{P}$ is satisfied by $X$ but not by contractible spaces;
	\item every complex obtained from $X$ by adding (possibly infinitely many) cells in dimensions $\leq n$ still satisfies $\mathcal{P}$.
\end{itemize}
For instance, a complex whose $n$th homotopy or (co)homology group is non-trivial has some non-trivial topology in dimension $n$. 

\begin{proof}[Proof of Theorem~\ref{thm:GraphProdFW}.]
Let $X$ be a median graph of cubical dimension $n$ on which $\Gamma[G,H]$ acts. Up to replacing $X$ with a subdivision, we assume that there is no hyperplane-inversion. For every vertex-group $G_v$ ($v \in V(\Gamma)$) of $\Gamma G$, we know by assumption that $\mathrm{Fix}(G_v)$ is convex in $X$. Moreover, given two adjacent $u,v \in V(\Gamma)$, we know that $\langle G_u,G_v \rangle \simeq G_u \times G_v$ has to fix a vertex, which amounts to saying that $\mathrm{Fix}(G_u) \cap \mathrm{Fix}(G_v) \neq \emptyset$. Now, we need to distinguish two cases.

\medskip \noindent
First, assume that there exists two non-adjacent $u,v \in V(\Gamma)$ such that $\mathrm{Fix}(G_u) \cap \mathrm{Fix}(G_v) \neq \emptyset$. Because $\Gamma$ has diameter two and because of our transitivity assumption on $\Gamma$, it follows that $\mathrm{Fix}(G_u) \cap \mathrm{Fix}(G_v) \neq \emptyset$ for all vertices $u,v \in V(\Gamma)$. It follows from the Helly property for convex subgraphs that $\bigcap_{v \in V(\Gamma)} \mathrm{Fix}(G_v)$ is non-empty, which amounts to saying that there exists a vertex in $X$ fixed by $\Gamma G$ entirely. We conclude that $\Gamma[G:H]$ has a global fixed point in $X$.

\medskip \noindent
Next, assume that $\mathrm{Fix}(G_u) \cap \mathrm{Fix}(G_v) \neq \emptyset$ if and only if $u$ and $v$ are adjacent in $\Gamma$. Let $Y$ denote the union $\bigcup_{u \in V(\Gamma)} \mathrm{Fix}(u)$ and let $Y^\square,X^\square$ denote respectively the cube-completions of $Y,X$. Our assumption precisely means that the nerve complex of the cube-completions of the $\mathrm{Fix}(G_u)$, $u \in V(\Gamma)$, in $Y^\square$ is isomorphic to the flag completion $\Gamma^\triangle$ of $\Gamma$. Because a non-empty intersection of $\mathrm{Fix}(G_u)$ is convex, the corresponding cube-completion must be contractible. It follows from Leray's nerve theorem that $Y^\square$ is homotopy equivalent to $\Gamma^\triangle$. Thus, $Y^\square$ has some non-trivial topology in dimension $n$. But $X^\square$ is obtained from $Y^\square$ by adding cells in dimensions $\leq n$, so $X^\square$ cannot be contractible, a contradiction. 
\end{proof}

\noindent
As an application, we can construct examples of groups that are cubulable in dimension $n$ but with the fixed-point property $(\mathrm{FW}_{n-1})$ in dimension one less.

\begin{cor}\label{cor:CubulableButFW}
Let $n \geq 2$ be an integer and let $\Gamma$ denote the join of $n$ copies of the graph having only two isolated vertices. For every $q \geq 2$ with no divisor in $[2,n]$, the group
$$\Gamma(\mathbb{Z} /q \mathbb{Z}) \rtimes \mathrm{Isom}(\Gamma)$$
admits a proper and cocompact action on a median graph of cubical dimension $n$ but satisfies $(\mathrm{FW}_{n-1})$. 
\end{cor}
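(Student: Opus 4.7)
The plan is to prove the two assertions of the corollary independently. For the proper and cocompact action in cubical dimension $n$, I would invoke the explicit construction recalled just before Theorem~\ref{thm:VGPcc}, which applies because $\mathbb{Z}/q\mathbb{Z}$ is finite: the median graph $M$ whose vertices are the cosets $g \langle \Lambda \rangle$, with $g \in \Gamma(\mathbb{Z}/q\mathbb{Z})$ and $\Lambda \subset \Gamma$ a complete subgraph, carries a proper and cocompact left-multiplication action of $\Gamma(\mathbb{Z}/q\mathbb{Z})$, and its cubes correspond to the complete subgraphs of $\Gamma$, so the cubical dimension of $M$ equals $\mathrm{clique}(\Gamma)$. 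Since $\Gamma$ is the join of $n$ copies of two isolated vertices, a maximal clique selects exactly one vertex from each of the $n$ pairs, whence $\mathrm{clique}(\Gamma) = n$. The action extends to $\Gamma(\mathbb{Z}/q\mathbb{Z}) \rtimes \mathrm{Isom}(\Gamma)$ because $\mathrm{Isom}(\Gamma)$ permutes the family $\{\langle \Lambda \rangle \mid \Lambda \subset \Gamma \text{ complete}\}$, and the extended action remains proper and cocompact.

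For the fixed-point property, my plan is to apply Theorem~\ref{thm:GraphProdFW} with its parameter taken to be $n-1$, to the data $G = \mathbb{Z}/q\mathbb{Z}$, the graph $\Gamma$ above, and $H = \mathrm{Isom}(\Gamma)$. Three hypotheses need to be verified. First, since the assumption that $q$ has no divisor in $[2,n]$ entails in particular no divisor in $[2,n-1]$, Corollary~\ref{cor:FWPlus} shows that $\mathbb{Z}/q\mathbb{Z}$ satisfies $(\mathrm{FW}_{n-1}^+)$; and $\Gamma$ manifestly has diameter two, because two vertices from distinct pairs are adjacent while two vertices from a common pair share a neighbour in any other pair. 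Second, the flag completion $\Gamma^\triangle$ is the iterated simplicial join of $n$ copies of $S^0$, hence is homeomorphic to the sphere $S^{n-1}$, and therefore has non-trivial topology in dimension $n-1$, witnessed by the homotopy-invariant property ``$H_{n-1}(-;\mathbb{Z}) \neq 0$''. Third, two vertices of $\Gamma$ lie at distance $2$ precisely when they form one of the $n$ pairs, and $\mathrm{Isom}(\Gamma) \cong (\mathbb{Z}/2\mathbb{Z})^n \rtimes S_n$ plainly acts transitively on these pairs. Theorem~\ref{thm:GraphProdFW} then delivers $(\mathrm{FW}_{n-1})$.

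The main (mild) obstacle is to convince oneself that ``$H_{n-1} \neq 0$'' really satisfies the stability clause in the definition of ``non-trivial topology in dimension $n-1$'' stated just before Theorem~\ref{thm:GraphProdFW}. The point is that attaching a cell of dimension $k \leq n-1$ cannot kill an $(n-1)$-homology class: for $k < n-1$ the cellular chain groups $C_{n-1}$ and $C_n$ are both unaffected, while for $k = n-1$ the group $C_{n-1}$ gains a new generator but $C_n$, and hence the subgroup $B_{n-1}$ of boundaries, is unchanged, so the old $H_{n-1}$ injects into the new one. Hence the nontrivial class in $H_{n-1}(S^{n-1})$ survives all allowed attachments, the hypotheses of Theorem~\ref{thm:GraphProdFW} are satisfied, and the corollary follows.
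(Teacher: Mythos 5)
Your argument is correct and follows essentially the same route as the paper: identify $\Gamma^\triangle$ with the sphere $\mathbb{S}^{n-1}$, invoke Corollary~\ref{cor:FWPlus} for $(\mathrm{FW}_{n-1}^+)$ of $\mathbb{Z}/q\mathbb{Z}$, and apply Theorem~\ref{thm:GraphProdFW}, with cubulability in dimension $n$ coming from the coset complex construction. Your explicit verification of $\mathrm{clique}(\Gamma)=n$ and of the stability of $H_{n-1}\neq 0$ under cell attachment are details the paper leaves implicit, and your direct use of the finite-vertex-group coset complex is actually slightly cleaner than citing Theorem~\ref{thm:VGPcc}, which gives $d\cdot\mathrm{clique}(\Gamma)$ and requires observing $d=1$ for a finite group.
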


\begin{proof}
Notice that $\Gamma$ is the one-skeleton of a triangulation of the sphere $\mathbb{S}^{n-1}$. Thus, using also Corollary~\ref{cor:FWPlus}, we verify easily that Theorem~\ref{thm:GraphProdFW} applies, proving that our group satisfies $(\mathrm{FW}_{n-1})$. The fact that it is cubulable in dimension $n$ follows from Theorem~\ref{thm:VGPcc}. 
\end{proof}

\noindent
Theorem~\ref{thm:GraphProdFW} provides a lot of possible examples, but the restriction for the graph $\Gamma$ to have diameter two imposes severe restrictions on the geometry of the resulting virtual graph product. For instance, the examples given by Corollary~\ref{cor:CubulableButFW} are all products of virtually free groups. Our next theorem shows that the strategy used in order to prove Theorem~\ref{thm:GraphProdFW} can be applied to some graphs with large diameters. For instance, it produces acylindrically hyperbolic examples.

\begin{thm}\label{thm:GraphProdFWstraight}
Let $r,s,n \geq 1$ satisfy $r\geq s$ and $s \geq n$, and let $\Gamma:=\Gamma_{r,s}$ denote the graph obtained from an $r$-cube by connecting with an edge any two vertices at distance $\leq s$. For every group $G$ satisfying $(\mathrm{FW}_n^+)$, the virtual graph product $\Gamma[G]$ satisfies $(\mathrm{FW}_n)$ but, if $G$ acts properly and cocompactly on some median graph, then so does $\Gamma[G]$. 
\end{thm}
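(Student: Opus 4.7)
The cubulation assertion is immediate from Theorem~\ref{thm:VGPcc}. For the fixed-point property, my plan is to mimic the dichotomy from the proof of Theorem~\ref{thm:GraphProdFW}, but to localise the topological obstruction on a well-chosen sub-cube of the $r$-cube, since $\Gamma_{r,s}$ may have arbitrarily large diameter. Let $\Gamma_{r,s}[G]$ act on a median graph $X$ of cubical dimension $\leq n$, and after a subdivision assume this action has no hyperplane inversion. For each $v \in V(\Gamma_{r,s}) = \{0,1\}^r$ set $F_v := \mathrm{Fix}(G_v)$, which is non-empty and convex by $(\mathrm{FW}_n^+)$. When $u,v$ are at cube distance $\leq s$, $G_u$ and $G_v$ commute, so $G_v$ preserves the convex median subcomplex $F_u$ and $(\mathrm{FW}_n^+)$ applied to the induced action yields $F_u \cap F_v \neq \emptyset$.

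The hyperoctahedral group $(\mathbb{Z}/2)^r \rtimes S_r$ embeds into $\mathrm{Isom}(\Gamma_{r,s})$ and acts transitively on ordered pairs of vertices at any fixed cube distance, sending each $F_v$ to $F_{hv}$. Hence the predicate ``$F_u \cap F_v \neq \emptyset$'' depends only on $d_{\mathrm{cube}}(u,v)$, and is recorded by a subset $S \subseteq \{0,1,\ldots,r\}$ with $\{0,\ldots,s\} \subseteq S$. If $S = \{0,1,\ldots,r\}$, all $F_v$ pairwise intersect, so the Helly property of convex subgraphs of median graphs yields $\bigcap_{v} F_v \neq \emptyset$; this convex set is preserved setwise by the finite group $\mathrm{Isom}(\Gamma_{r,s})$, which therefore fixes a point in it, providing a global fixed point for $\Gamma[G]$.

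Otherwise, let $k := \min\{j > s : j \notin S\}$, so $k \geq s+1 \geq n+1$, and pick a $k$-face $C$ of the $r$-cube. Its $2^k$ vertices are pairwise at cube distance $\leq k$, with antipodal pairs of $C$ at distance exactly $k$. By minimality of $k$, any subset $A \subseteq V(C)$ free of antipodal pairs has pairwise intersecting fixators, so $\bigcap_{v \in A} F_v \neq \emptyset$ by the median Helly property; any $A$ containing an antipodal pair has empty intersection. Therefore the nerve of the cover $\{F_v^\square : v \in V(C)\}$ of $Y := \bigcup_{v \in V(C)} F_v^\square$ is the flag complex of the complement of a perfect matching on $\{0,1\}^k$, i.e., the $k$-fold join of $\mathbb{S}^0$, which is $\mathbb{S}^{k-1}$. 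Since non-empty intersections are convex, hence contractible, Leray's nerve theorem gives $Y \simeq \mathbb{S}^{k-1}$. Now $Y$ is a subcomplex of $X^\square$, and $X^\square$ is obtained from $Y$ by attaching cubes of dimension $\leq n$; since $k-1 \geq n$, attaching $\leq n$-cells cannot kill the fundamental class in $H_{k-1}$ (nor in $H_n$ if $k-1 = n$), contradicting the contractibility of the CAT(0) cube complex $X^\square$.

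The main technical point is the dichotomy: the hyperoctahedral transitivity collapses the intersection pattern of the family $\{F_v\}$ to a single combinatorial invariant $S \subseteq \{0,\ldots,r\}$, after which the argument splits cleanly into a median Helly argument when $S$ is full and a sphere-survival obstruction when it is not. The hypothesis $s \geq n$ enters exactly once, to guarantee $k-1 \geq n$ so that the sphere $\mathbb{S}^{k-1}$ found in $Y$ has too high a dimension to be killed inside the $n$-dimensional CAT(0) complex $X^\square$.
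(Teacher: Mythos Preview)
Your argument is essentially identical to the paper's: both localise to a minimal-dimensional subcube where some pair of fixed-point sets fails to intersect, identify the nerve of the cover by the $\mathrm{Fix}(G_v)$'s with a cross-polytope boundary sphere, and derive a homological contradiction with the contractibility of $X^\square$. One small correction (which the paper's own proof also gets wrong): the antipodal matching on the $2^k$ vertices of a $k$-cube has $2^{k-1}$ pairs, so the nerve is the $2^{k-1}$-fold join of $\mathbb{S}^0$, i.e.\ $\mathbb{S}^{2^{k-1}-1}$, not $\mathbb{S}^{k-1}$; this is harmless, since $2^{k-1}-1 \geq k-1 \geq n$ only strengthens the obstruction.
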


\begin{proof}
Let $X$ be a median graph of cubical dimension $n$ on which $\Gamma[G]$ acts. Up to replacing $X$ with its cubical subdivision, we can assume that every elliptic subgroup of $\Gamma[G]$ fixes a vertex. For every vertex-group $G_v$ ($v \in V(\Gamma)$) of $\Gamma G$, we know by assumption that $\mathrm{Fix}(G_v)$ is convex in $X$. Moreover, given two adjacent $u,v \in V(\Gamma)$, we know that $\langle G_u,G_v \rangle \simeq G_u \times G_v$ has to fix a vertex, which amounts to saying that $\mathrm{Fix}(G_u) \cap \mathrm{Fix}(G_v) \neq \emptyset$.

\medskip \noindent
Let $\Gamma_0$ denote the $r$-cube from which $\Gamma$ is constructed. If the $\mathrm{Fix}(G_u)$, $u \in V(\Gamma)$, pairwise intersect, then the Helly property for convex subgraphs imply that there exists a vertex fixed by all the factors $G_u$, $u \in V(\Gamma)$, of $\Gamma G$, which implies that $\Gamma[G]$ has a global fixed point. So from now on, we assume that there exist at least two vertices $u,v \in V(\Gamma)$ such that $\mathrm{Fix}(G_u) \cap \mathrm{Fix}(G_v) = \emptyset$. We choose the vertices $u$ and $v$ with minimal distance, say $d$, in $\Gamma_0$; notice that this distance must be $>s$ because $u$ and $v$ cannot be adjacent in $\Gamma$. Let $Q_0 \subset \Gamma_0$ denote the smallest subcube containing $u$ and $v$, and let $Q \subset \Gamma$ denote the corresponding induced subgraph. 

\medskip \noindent
Let $Y \subset X$ denote the union of the $\mathrm{Fix}(G_a)$ for $a \in Q$. By minimality of the distance between $u$ and $v$, and because $\mathrm{Isom}(\Gamma)$ permutes transitively the vertices of $\Gamma_0$ at distance $d$, the one-skeleton of the nerve complex coincides with the graph obtained from $Q_0$ by connecting with an edge any two vertices that are not opposite in the cube $Q_0$. Consequently, this graph is the join of $d$ pairs of isolated vertices (corresponding to the pairs of opposite vertices), which implies that our nerve complex is a $(d-1)$-sphere. Because a non-empty intersection of $\mathrm{Fix}(G_a)$ is convex, the corresponding cube-completion must be contractible. It follows from Leray's nerve theorem that the cube-completion $Y^\square$ of $Y$ is homotopy equivalent to a $(d-1)$-sphere. 

\medskip \noindent
But the cube-completion $X^\square$ of $X$ can be obtained from $Y^\square$ by adding cells of dimensions $\leq n$. As $n \leq s <d$, this operation cannot kill our $(d-1)$-sphere homotopically, contradicting the fact that $X^\square$ is contractible.

\medskip \noindent
Thus, we have proved that $\Gamma[G]$ satisfies $(\mathrm{FW}_n)$. The last assertion of our theorem is an immediate consequence of Theorem~\ref{thm:VGPcc}. 
\end{proof}

\section{Final comments and questions}\label{section:Questions}

\noindent
In this final section, we discuss the results proved in the article and record some open questions and onjectures related to median fixed-point properties.

\paragraph{Other sources of examples.} As mentioned in the introduction, median fixed-point properties are fully understood only for a few families of groups, and it would be interesting to find other sources of examples. For instance, \cite{CornulierCommensurated} shows that $\mathrm{SL}_2(\mathbb{Z}[\sqrt{2}])$ satisfies $(\mathrm{FW})$ (but not $(\mathrm{T})$) by noticing that it is boundedly generated by distorted abelian subgroups. Is it possible to find examples with $(\mathrm{FW})$ but not $(\mathrm{T})$ not based on distortion? For instance:

\begin{question}
Does there exist a finitely generated group with $(\mathrm{FW})$ but not $(\mathrm{T})$ all of whose finitely generated abelian subgroups are undistorted? 
\end{question}

\begin{question}
Does there exist a hyperbolic or CAT(0) group with $(\mathrm{FW})$ but not $(\mathrm{T})$?
\end{question}

\noindent
The examples constructed in this article are generated by finite-order elements, and this is fundamental in our arguments. Therefore, in order to find groups of different nature, it is natural to ask for torsion-free examples.

\begin{problem}\label{prob:TorsionFree}
For every $n \geq 1$, construct a torsion-free group with $(\mathrm{FW}_n)$ but acting properly and cocompactly on a median graph of cubical dimension $n+1$. 
\end{problem}

\noindent
A plausible source of torsion-free finitely generated groups with $(\mathrm{FW}_n)$ but not $(\mathrm{FW}_{n+1})$ is given by Bieberbach groups (i.e.\ torsion-free virtually abelian groups). Investigating these groups would be interesting.

\medskip \noindent
Notice that, among the groups given by Theorems~\ref{thm:VirtuallyAbelian}, \ref{thm:GraphProdFW}, and~\ref{thm:GraphProdFWstraight}, none are hyperbolic. (Notice, however, that Theorem~\ref{thm:GraphProdFWstraight} provides acylindrically hyperbolic groups.) 

\begin{problem}
For every $n \geq 1$, construct a hyperbolic group with $(\mathrm{FW}_n)$ but acting properly and cocompactly on a median graph of cubical dimension $n+1$.
\end{problem}

\noindent
In this direction, an intriguing problem is the understanding of median fixed-point properties of (classical) small cancellation groups. We know from \cite{MR2053602} that they act properly and cocompactly on median graphs, but, in the construction, the cubical dimension increases with the lengths of the relations. Moreover, it follows from \cite{MR4108839} that the cubical dimension cannot be bounded uniformly and, according to \cite{MR0707619}, some small cancellation groups verify $(\mathrm{FA})$. Understanding when a small cancellation group satisfies (or not) the property $(\mathrm{FW}_n)$ is a good training problem.

\medskip \noindent
It might be frustrating that the examples provided by Theorems~\ref{thm:VirtuallyAbelian}, \ref{thm:GraphProdFW}, and~\ref{thm:GraphProdFWstraight} all virtually act non-trivially on trees. In other words, our examples crucially exploit the sensibility of $(\mathrm{FW}_n)$ to commensurability. This motivates the following question:

\begin{question}\label{question:FWandFI}
For every $n \geq 2$, does there exist a group acting properly and cocompactly on a median graph of cubical dimension $n$ but all of whose finite-index subgroup satisfy $(\mathrm{FW}_{n-1})$?
\end{question}

\noindent
It is worth mentioning that constructing a cubulable group all of whose finite-index subgroups satisfy $(\mathrm{FA})$ is already non-trivial. The main reason is that many of the cubulable groups we are familiar with are virtually special, and consequently virtually act on trees, and that most of the exotic cubulable groups we know are lattices in products of trees \cite{MR1446574, MR2694733,MR3931408}. Answering a question of I. Chatterji, the first example of a cubulable group all of whose finite-index have $(\mathrm{FA})$ has been recently constructed in \cite{MR4481088}. 

\begin{conj}\label{Conj}
For every $n \geq 1$, there exists an infinite cubulable group satisfying $(\mathrm{FW}_n)$ with no proper finite-index subgroup. 
\end{conj}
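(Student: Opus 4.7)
The plan is to produce, for each $n\geq 1$, a cubulable infinite group with $(\mathrm{FW}_n)$ and no proper finite-index subgroup. Since the mere existence of an infinite cubulable group with no proper finite-index subgroup is already non-trivial (a closely related problem was settled in \cite{MR4481088}), the strategy is to combine a classical source of simplicity---Burger--Mozes lattices in products of trees---with dimensional rigidity that guarantees $(\mathrm{FW}_n)$.

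Concretely, I would look for an irreducible cocompact lattice $\Gamma \leq \prod_{i=1}^{n+1} \mathrm{Aut}(T_{d_i})$ in the Burger--Mozes tradition, with local actions chosen so that a normal subgroup theorem \`a la Burger--Mozes--Caprace--Monod applies and yields an infinite (virtually) simple quotient $\Gamma_0$. Such a $\Gamma_0$ inherits the proper cocompact action on the median graph $\prod_{i=1}^{n+1} T_{d_i}$ of cubical dimension $n+1$, and by simplicity it admits no proper finite-index subgroup; cubulability in a dimension exceeding $n$ is thus built in by construction.

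The main step, and the main obstacle, is to establish $(\mathrm{FW}_n)$ for $\Gamma_0$. The free rigidity input is that an infinite, simple, non-cyclic group admits no non-trivial morphism to $\mathbb{D}_\infty$; combined with Lemma~\ref{lem:QLmorphismD}, this forces every action of $\Gamma_0$ on a median quasi-line to have bounded orbits. To upgrade this to arbitrary actions on median graphs of cubical dimension $\leq n$, one needs a structural theorem reducing such actions to finitely many actions on quasi-lines---an analogue of the median flat torus theorem (Theorem~\ref{thm:MedianFTT}) valid beyond the virtually abelian setting. A plausible substitute is the rank rigidity machinery of Caprace--Sageev: an essential, non-elementary action on a finite-dimensional CAT(0) cube complex decomposes into rank-one irreducible factors, and the $(n+1)$-fold irreducibility of $\Gamma_0$ inside $\prod_{i=1}^{n+1}\mathrm{Aut}(T_{d_i})$ should prevent a faithful decomposition into only $n$ rank-one pieces, each of which would have to be elliptic by the quasi-line discussion above.

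The truly hard part is engineering the local actions so that the Burger--Mozes normal subgroup theorem applies \emph{and} the resulting simple group satisfies the required cubical rigidity in every lower dimension; the two requirements tend to pull in opposite directions. A fallback strategy would be to modify the virtual graph product construction of Theorem~\ref{thm:GraphProdFW} and pass to a well-chosen small cancellation quotient that is cubulable via \cite{MR2053602} and still retains enough graph-product rigidity for the Helly-based argument proving $(\mathrm{FW}_n)$; one would then kill finite-index subgroups by a Higman-style embedding trick. Both routes seem delicate, which is consistent with the conjecture being open.
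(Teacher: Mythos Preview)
This statement is labelled a conjecture in the paper, and the paper offers no proof, only a heuristic route. Your proposal is likewise a strategy sketch, and you acknowledge it does not close.

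The paper's route is different from your primary one. Rather than searching for a single lattice with built-in rigidity, the paper proposes a common small cancellation quotient: take a group $A$ already known to satisfy $(\mathrm{FW}_n)$ (e.g.\ a virtual graph product from Theorem~\ref{thm:GraphProdFWstraight}) and a cubulable group $B$ with no proper finite-index subgroup (a free product of Burger--Mozes or Wise lattices), and form a cubulable small cancellation quotient $Q$ of $A\ast B$. Both $(\mathrm{FW}_n)$ and the absence of proper finite-index subgroups then pass to $Q$ for free, since $Q$ is a quotient of each. The obstacle the paper identifies is that Wise's cubical small cancellation requires torsion-freeness, while the available $(\mathrm{FW}_n)$ examples have torsion; this is why Conjecture~\ref{ConjSC} is recorded as an intermediate target. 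Your fallback is loosely in the same spirit but less direct, since you propose to quotient a single virtual graph product and then remove finite-index subgroups afterwards by an embedding trick, rather than combining the two features in one quotient.

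Your primary route has a concrete gap in the $(\mathrm{FW}_n)$ step. A simple lattice $\Gamma_0 \leq \prod_{i=1}^{n+1}\mathrm{Aut}(T_{d_i})$ acts, via projection to each factor, on every individual tree $T_{d_i}$ with unbounded orbits; so $\Gamma_0$ already fails $(\mathrm{FW}_1)$, let alone $(\mathrm{FW}_n)$. More generally, Caprace--Sageev rank rigidity does not reduce an action to actions on quasi-lines: an irreducible rank-one factor can be a tree or any hyperbolic cube complex, and simplicity (hence the absence of morphisms to $\mathbb{D}_\infty$) does not force ellipticity on such factors. Lemma~\ref{lem:QLmorphismD} is genuinely specific to quasi-lines, and the upgrade from quasi-lines to full $(\mathrm{FW}_n)$ in the paper works only through Theorem~\ref{thm:MedianFTT}, which is particular to virtually abelian groups; there is no known analogue for lattices of the type you describe.
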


\noindent
Our conjecture provides a (weak) positive answer to Question~\ref{question:FWandFI}. We explain below why this statement is plausible.

\paragraph{A detour to median small cancellation.} A classical strategy used to produce examples satisfying various fixed-point properties is to create common small cancellation quotients. More precisely, let $A$ and $B$ be two groups. Given two sets of generators $a_1, \ldots, a_n \in A$ and $b_1, \ldots, b_m \in B$, the group
$$Q:= (A \ast B) / \langle \langle a_1=v_1, \ldots, a_n=v_n, b_1=u_1, \ldots, b_m=u_m \rangle \rangle,$$
where $u_1, \ldots, u_m \in A$ and $v_1, \ldots, v_n \in B$ are elements thought of as parameters of the construction, provides a common quotient of $A$ and $B$. When working in a family of groups with a good theory of small cancellation, we can choose the $u_i$ and $v_j$ ``sufficiently complicated'' in order to get some control on $Q$. Regarding Conjecture~\ref{Conj}, we would like to take $A$ with $(\mathrm{FW}_n)$ and $B$ with no proper finite-index subgroup. If we are able to show that $Q$ is infinite and cubulable, then for free it satisfies $(\mathrm{FW}_n)$ and it has no proper finite-index subgroup. Examples of cubulable groups without proper finite-index that admit small cancellation quotients are for instance provided by free products of Wise's or Burger-Mozes' lattices in products of trees \cite{MR1446574, MR2694733}; and examples of cubulable groups with $(\mathrm{FW}_n)$ that admit small cancellation quotients are given by Theorem~\ref{thm:GraphProdFWstraight}. So it only remains to understand the median geometry of small cancellation quotients of cubulable groups.

\medskip \noindent
In this perspective, Wise's cubical small cancellation \cite{MR4298722} provides a good framework. Notice, however, that the theory only applies to fundamental groups of non-positively curved cube complexes, and consequently only to torsion-free groups. Therefore, this does not apply to the groups given by Theorem~\ref{thm:GraphProdFWstraight}\footnote{But this could apply to groups solving Problem~\ref{prob:TorsionFree}.}. Nevertheless, in the spirit of the geometric small cancellation described in \cite{MR3589159}, it is possible to reformulate the cubical small cancellation in the context of groups acting on median graphs with contracting isometries. This is done in part in \cite{Book}, and we plan to pursue in this direction in future works. As another approach, more elementary, it is possible to extend Brady and McCammond's idea to cubulate $C'(1/4)-T(4)$ groups (unpublished). This can be applied to groups acting on median graphs, but, unfortunately, the condition $T(4)$ is usually not satisfied by our virtual graph products. It could, however, apply to other groups.

\begin{conj}\label{ConjSC}
Two acylindrically hyperbolic cubulable groups admit a common quotient that is acylindrically hyperbolic and cubulable.
\end{conj}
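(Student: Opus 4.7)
The plan is to follow the strategy outlined in the paragraph preceding the conjecture. Given two acylindrically hyperbolic cubulable groups $A$ and $B$, fix generators $a_1, \ldots, a_n$ of $A$ and $b_1, \ldots, b_m$ of $B$, and aim to build the common quotient
$$Q := (A \ast B) / \langle \langle a_1 = v_1, \ldots, a_n = v_n, b_1 = u_1, \ldots, b_m = u_m \rangle \rangle$$
for a careful choice of parameters $u_i \in A$ and $v_j \in B$. The goal is then to show that, for suitable parameters, $Q$ is infinite, cubulable, and acylindrically hyperbolic — none of which are automatic from the construction, but all three of which should be forced by a sufficiently strong small-cancellation hypothesis on the $u_i$ and $v_j$.

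The first step is to select the parameters. Both $A$ and $B$ contain loxodromic isometries of their respective median cubulations whose axes are strongly contracting, as is standard for acylindrically hyperbolic groups acting cocompactly on median graphs (e.g.\ via Caprace--Sageev-type rank-one criteria). Taking high powers and appropriate concatenations of such isometries, one can build words $u_i$ and $v_j$ satisfying arbitrarily strong geometric small-cancellation conditions with respect to the median metrics on $A$ and $B$.

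The heart of the argument is then to establish a cubical small-cancellation theorem in this generality. Wise's machinery \cite{MR4298722} cubulates small-cancellation quotients via cone attachments in NPC cube complexes, but only handles torsion-free fundamental groups of NPC cube complexes. The plan is to reformulate this machinery for groups acting on median graphs with contracting isometries, in the spirit of the geometric small cancellation of \cite{MR3589159} and as initiated in \cite{Book}: one would cubulate $Q$ by constructing a wallspace whose walls combine those coming from the original cubulations of $A$ and $B$ with new walls arising from the quotiented-out contracting axes. Given such a theorem, the infinitude of $Q$ should follow from a Greendlinger-style lemma, and the acylindrical hyperbolicity of $Q$ from the persistence of a loxodromic WPD element in the quotient (via, e.g., Osin's criterion applied to a generic long element of $A$ or $B$).

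The hardest step is clearly the cubulation of $Q$: extending Wise's theory beyond the torsion-free setting, even in the median formulation of \cite{Book}, does not yet seem to be fully available, and the virtual graph products produced by Theorem~\ref{thm:GraphProdFWstraight} are genuinely non-torsion-free, which obstructs any direct appeal. A more elementary fallback mentioned in the text would be Brady--McCammond's $C'(1/4)$-$T(4)$ cubulation, which tolerates torsion; but the $T(4)$ condition is typically violated by virtual graph products and would have to be engineered by substantial modifications of the relators — a delicate combinatorial task that seems to be the main obstacle to a complete proof.
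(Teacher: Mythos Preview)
This statement is a \emph{conjecture}, not a theorem: the paper does not prove it. There is therefore no proof in the paper to compare your proposal against. What the paper does contain, in the paragraphs surrounding Conjecture~\ref{ConjSC}, is exactly the heuristic strategy you have written up --- build a common quotient of $A$ and $B$ by small-cancellation relators in $A\ast B$, then try to cubulate the quotient via a median/geometric version of Wise's cubical small cancellation --- together with the same caveats you raise: Wise's theory as stated applies only to torsion-free fundamental groups of NPC cube complexes, the median reformulation in the spirit of \cite{MR3589159} and \cite{Book} is not yet complete, and the Brady--McCammond $C'(1/4)$--$T(4)$ fallback typically fails the $T(4)$ condition for virtual graph products.

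So your proposal is not a proof, and you say so yourself: the cubulation step ``does not yet seem to be fully available'' and is ``the main obstacle to a complete proof.'' That is an accurate assessment and matches the paper's own position. Nothing in what you wrote is wrong as a research plan, but it does not close the gap the paper leaves open; the conjecture remains a conjecture.
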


\noindent
Proving Conjecture~\ref{ConjSC} would imply Conjecture~\ref{Conj}. Notice that Conjecture~\ref{ConjSC} has other interesting consequences. For instance, it implies that acylindrically hyperbolic cubulable groups admit many cubulable quotients. More precisely, given an acylindrically hyperbolic cubulable group $G$, if Conjecture~\ref{ConjSC} is true, then:
\begin{itemize}
	\item every cubulable group embeds into a cubulable quotient of $G$;
	\item for every $n \geq 1$, $G$ admits an acylindrically hyperbolic cubulable group satisfying $(\mathrm{FW}_n)$;
	\item for every $n \geq 1$, there exists a proper cubulable quotient $G \twoheadrightarrow G_n$ that is injective on balls of radius $n$.
\end{itemize}
Conjecture~\ref{ConjSC} would also imply interesting embedding theorems. For instance, every cubulable group would embed into a $2$-generated cubulable group. In the same direction, we record the following interesting question:

\begin{question}\label{question:Simple}
Does every cubulable group embeds into a simple cubulable group?
\end{question}

\noindent
Proving Conjecture~\ref{ConjSC} would not be sufficient to answer positively this question, since simple groups are quite different from acylindrically hyperbolic groups. However, answering Question~\ref{question:Simple} would improve our understanding of simple cubulable groups, which remains limited so far.

\paragraph{Specific families of groups.} As examples of specific groups for which it would be interesting to understand median fixed-point properties, we have already mentioned in the introduction Cornulier's conjecture about lattices in Lie groups. We have also discussed, earlier in the section, (classical) small cancellation groups. Let us mention another open question, probably well-known by specialists (at least in part):

\begin{question}
Let $\Sigma_g$ be a closed surface of genus $g \geq 3$. Does the mapping class group $\mathrm{MCG}(\Sigma_g)$ satisfy $(\mathrm{FW})$? $(\mathrm{FW}_\mathrm{fin})$?
\end{question}

\noindent
It has been proved in \cite{MR1339818} (see also \cite{MR2665003}) that mapping class groups cannot act properly on CAT(0) spaces by semi-simple isometries, and a fortiori on median graphs of finite cubical dimension. In fact, according to \cite{MR4574362}, the finiteness of the cubical dimension can be removed. But, up to my knowledge, no median fixed-point property is known for mapping class groups (except for trees). It is worth noticing that $(\mathrm{FW}_\mathrm{fin})$ would imply that mapping class groups do not virtually surjects onto $\mathbb{Z}$, a well-known question which is still open.

\addcontentsline{toc}{section}{References}

\bibliographystyle{alpha}
{\footnotesize\bibliography{FWCC}}

\Address

%

\end{document}